\newtheorem{theo+}              {Theorem}           [section]
\newtheorem{prop+}  [theo+]     {Proposition}
\newtheorem{coro+}  [theo+]     {Corollary}
\newtheorem{lemm+}  [theo+]     {Lemma}
\newtheorem{exam+}  [theo+]     {Example}
\newtheorem{rema+}  [theo+]     {Remark}
\newtheorem{defi+}  [theo+]     {Definition}
\def \r{\mbox{${\mathbb R}$}}
\newenvironment{theorem}{\begin{theo+}}{\end{theo+}}
\newenvironment{proposition}{\begin{prop+}}{\end{prop+}}
\newenvironment{corollary}{\begin{coro+}}{\end{coro+}}
\theoremstyle{plain} \theoremstyle{remark}
\newtheorem{remark}{Remark}
\newtheorem{example}{Example}
\def \r{\mbox{${\mathbb R}$}}
\title{On conformal biharmonic hypersurfaces}
\author{A. Mohammed Cherif and Ye-Lin Ou}
\address{Faculty of Exact Sciences,\newline\indent University Mustapha Stambouli,
\newline\indent Mascara, Algeria.\newline\indent Email: a.mohammedcherif@univ-mascara.dz
\newline\indent \newline\indent
Department of Mathematics,\newline\indent
East Texas A $\&$ M University,\newline\indent Commerce, TX
75429, U S A.\newline\indent E-mail:yelin.ou@etamu.edu}
\begin{document}

\title[Biharmonic conformal hypersurfaces]{On biharmonic conformal hypersurfaces}

\subjclass{58E20, 53C12} \keywords{ Biharmonic maps, biharmonic conformal immersions,
conformal hypersurfaces, totally umbilical hypersurfaces, isoparametric functions}
\date{01/06/26}
\maketitle

\section*{Abstract}
In this paper, we first derive biharmonic equation for conformal hypersurfaces in a generic Riemannian manifold generalizing that for biharmonic hypersurfaces in \cite{Ou1} and that for biharmonic conformal surfaces in \cite{Ou3, Ou2, Ou4}. We then show that if a totally umbilical hypersurface in a space form admits a biharmonic conformal immersion into the ambient space, then the conformal factor has to be an isoparametric function. We also prove that no part of a  non-minimal totally umbilical hypersurface in a space form of nonpositive curvature  admits a biharmonic conformally immersion into that space form whilst, for the positive curvature space form, we show that the totally umbilical hypersurface $S^4(\frac{\sqrt{3}}{2})\hookrightarrow S^5$ does admit a biharmonic conformal immersion into $S^5$. 
\begin{quote}
{\footnotesize }
\end{quote}

\section{Biharmonic conformal hypersurfaces: Equations and some examples}

Recall that a map $\varphi:(M, g)\longrightarrow (N, h)$ between Riemannian manifolds is biharmonic if it is a critical point of the
bienergy functional
\begin{equation}\nonumber
E_{2}\left(\varphi,\Omega \right)= \frac{1}{2} {\int}_{\Omega}
\left|\tau(\varphi) \right|^{2}{\rm d}x
\end{equation}
for any compact subset $\Omega$ of $M$, where $\tau(\varphi)={\rm
Trace}_{g}\nabla {\rm d} \varphi$ denotes the tension field of $\varphi$.
Biharmonic map equation is given by the Euler-Lagrange equation of the bienergy functional, which is a system of fourth order  PDEs (see \cite{Ji1})
\begin{equation}\notag
\tau_{2}(\varphi):={\rm
Trace}_{g}(\nabla^{\varphi}_{\cdot}\nabla^{\varphi}_{\cdot}-\nabla^{\varphi}_{\nabla^{M}_{\cdot}{\cdot}})\tau(\varphi)
- {\rm Trace}_{g} R^{N}({\rm d}\varphi({\cdot}), \tau(\varphi)){\rm d}\varphi({\cdot})
=0,
\end{equation}
where  $R^{N}$ to denotes
the curvature operator of $(N, h)$ defined by
$$R^{N}(X,Y)Z=
[\nabla^{N}_{X},\nabla^{N}_{Y}]Z-\nabla^{N}_{[X,Y]}Z.$$

An important aspect of the  study of biharmonic maps focuses on geometric  biharmonic maps, i.e., biharmonic maps with  geometric  meaning (or under some geometric constraints). These include biharmonic isometric immersions (i.e.,  biharmonic submanifolds  which generalize the concept  of minimal submanifolds) or  biharmonic conformal maps, both of these have led to  fruitful results and interesting links. For the study of biharmonic submanifolds see survey 
articles \cite{O2, Ou6}, a  recent book \cite{OC}, and the vast  references therein. For biharmonic Riemannian submersions, or more generally, horizontally weakly conformal biharmonic maps, see \cite{ Ou0, LO1, O1, BFO, LO2, WO1, 
GO, AO,  U1, U2, Ou5, WO2, WO3, Ou7, MS}. For biharmonic conformal maps between manifolds of the same dimension see \cite{BK, BFO, BLO,  BO, Ou8}. For biharmonic conformal immersions see \cite{Ou3, Ou2, Ou4, WC}.

Recall that a conformal immersion is an immersion $\phi: (M^{m},\bar{g}) \to (N^n,h)$ between two Riemannian manifolds such that $\phi^*h = \lambda^2 \bar{g}$ for a positive function $\lambda$ on $M$. When $\lambda$ is a constant, it is called a homothetic immersion. In particular, when $\lambda\equiv 1$ the conformal immersion becomes an isometric immersion. Recall also  that the image of an isometric immersion is called a submanifold, and a submanifold of codimension one is called a hypersurface. Similarly, the image of a conformal immersion is called a conformal submanifold and a codimension one conformal submanifold is called a conformal hypersurface.

Note that every conformal immersion $\phi: (M^{m},\bar{g}) \to (N^n,h)$ with $\phi^*h = \lambda^2 \bar{g}$ is associated with a submanifold, i.e., an isometric immersion $\phi: (M^{m}, g=\lambda^2\bar{g}) \to (N^n,h)$. Conversely, any submanifold (i.e., an isometric immersion) $(M^m, \phi^*h)\to (N^n, h)$ and any positive function $\lambda$ on $M$ is associated to a conformal immersion $(M^m,\lambda^{-2} \phi^*h)\to (N^n, h)$ with conformal factor $\lambda$. Following \cite{Ou2} ( Definition 2.2), we say a hypersurface in a Riemannian manifold $(N^{m+1}, h)$ defined by an isometric immersion $\phi: (M^{m},g=\phi^*h) \to (N^{m+1},h)$ admits a {\bf biharmonic conformal immersion} into $(N^{m+1}, h)$, if there exists  a smooth function $\lambda: M^m\to \r^+$ such that the conformal immersion $\phi: (M^{m}, \overline{g}=\lambda^{-2}g) \to (N^{m+1},h)$ with conformal factor $\lambda$ is a biharmonic map.

 It follows that a biharmonic conformal immersion with conformal factor $\lambda$ and  a hypersurface admitting a biharmonic conformal immersion with $\lambda$ satisfy the same equation. For $m=2$, it was proved in \cite{Ou3, Ou2, Ou4} that a conformal immersion $\phi : (M^{2},{\bar g}) \to (N^3,h)$ with
$\varphi^{*}h=\lambda^2{\bar g}$ is biharmonic if and only if 
\begin{equation}\label{M03}
\begin{cases}
\Delta (\lambda^2 H )-(\lambda^2H)[|A|^2-{\rm Ric}^N(\xi,\xi)]=0,\\A({\rm grad} (\lambda^2 H))+ (\lambda^2 H) [{\rm grad}
H- \,({\rm Ric}^N\,(\xi))^{\top}]=0,
\end{cases}
\end{equation}
where $\xi$, $A$, and $H$ are the unit normal vector field, the
shape operator, and the mean curvature function  of
the surface $\varphi(M)\subset (N^3, h)$ respectively, and the
operators $\Delta,\; {\rm grad}$ and $|,|$ are taken with respect to
the induced metric $g=\varphi^{*}h=\lambda^2{\bar g}$ on the surface.

For  general $m$, we have
\begin{theorem}\label{th1}
A conformal immersion $\phi : (M^m, \overline{g})\to (N^{m+1}, h)$ with $\phi^*h=\lambda^2\bar{g}$ is biharmonic if and only if
\begin{align}\label{NE}
&m\Big[\Delta H-H|A|^2+H{\rm Ric}^N(\xi,\xi)\Big]-2(m-2){\rm Tr\,}g(A(\cdot),{\nabla}^M_{\cdot}\nabla \ln\lambda)\\\notag
&+2mH\Big[\Delta {\rm ln}\lambda-(m-4)\left|\nabla\ln\lambda\right|^2\Big]-2m(m-4)g(\nabla\ln\lambda,\nabla H)\\\notag
&+(m-2)(m-6)g(A(\nabla\ln\lambda),\nabla\ln\lambda)=0,\\\label{TE}
&-m\Big[2A(\nabla H)+\frac{m}{2}\nabla H^2-2H({\rm Ric}^N\xi)^\top\Big]-(m-2)\Big[2\, {\rm Ric}^M\,(\nabla \ln\lambda)\\\notag
&+\nabla {\Delta}\ln\lambda\Big]-2(m-2)\Big[\Delta {\rm ln}\lambda-(m-4)\left|\nabla\ln\lambda\right|^2\Big]\nabla\ln\lambda\\\notag
&+\frac{1}{2}(m-2)(m-6)\nabla |\nabla\ln\lambda|^2+2m(m-4)HA(\nabla\ln\lambda)=0,
\end{align}
where $\xi$, $A$, and $H$ are the unit normal vector field, the
shape operator, and the mean curvature function  of
the hypersurface $\phi(M)\subset (N^{m+1}, h)$ respectively, and the
operators $\Delta,\; \nabla$ and $|,|$ are taken with respect to
the induced metric $g=\phi^{*}h=\lambda^2 {\bar g}$ on the hypersurface.

\end{theorem}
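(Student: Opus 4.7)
The plan is to reduce the problem to a computation around the associated isometric immersion $\phi:(M^m,g=\phi^{\ast}h)\to(N^{m+1},h)$ and then transfer the bitension field back to the conformal metric $\bar g=\lambda^{-2}g$. First I would invoke the standard conformal-change formula for the tension field,
\[
\tau_{\bar g}(\phi)=\lambda^2\bigl[\tau_g(\phi)-(m-2)\,d\phi(\nabla\ln\lambda)\bigr],
\]
where $\nabla$ is taken with respect to the induced metric $g$. Since $\phi:(M,g)\hookrightarrow(N,h)$ is an isometric immersion, $\tau_g(\phi)=mH\xi$, so
\[
\tau_{\bar g}(\phi)=\lambda^2\bigl[mH\,\xi-(m-2)\,d\phi(\nabla\ln\lambda)\bigr],
\]
a sum of a normal piece along $\xi$ and a tangential piece along $\phi(M)$.

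Next I would compute $\tau_{2,\bar g}(\phi)$ by applying the corresponding transformation rule for the rough Laplacian on sections of $\phi^{-1}TN$: for any such $V$, $\bar\Delta^\phi V$ equals $\lambda^2\Delta^\phi V$ plus correction terms in $\nabla\ln\lambda$ and $\nabla^\phi V$, together with the rescaling $\mathrm{Tr}_{\bar g}R^N(d\phi(\cdot),V)d\phi(\cdot)=\lambda^2\mathrm{Tr}_g R^N(d\phi(\cdot),V)d\phi(\cdot)$. Applied to $V=\tau_{\bar g}(\phi)$ and expanded via the Gauss and Weingarten formulas (using $\nabla^\phi_Y\xi=-d\phi(A(Y))$ and $\nabla^\phi_Y d\phi(X)=d\phi(\nabla^M_YX)+g(A(Y),X)\xi$) on both the normal summand $\lambda^2mH\xi$ and the tangential summand $-\lambda^2(m-2)d\phi(\nabla\ln\lambda)$, the computation produces exactly the geometric quantities appearing in \eqref{NE}--\eqref{TE}: $\Delta H$, $H|A|^2$, $A(\nabla H)$, $\nabla H^2$, $\nabla\Delta\ln\lambda$, $\Delta\ln\lambda$, $|\nabla\ln\lambda|^2$, $A(\nabla\ln\lambda)$, $g(A(\cdot),\nabla^M_{\cdot}\nabla\ln\lambda)$, together with ambient curvature contributions.

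Finally I would decompose $\tau_{2,\bar g}(\phi)=0$ into its components normal and tangent to $\phi(M)\subset N$: the normal part yields \eqref{NE} and the tangential part yields \eqref{TE}. Ambient curvature terms are reorganised via the identity
\[
\sum_{i=1}^{m}R^N\bigl(d\phi(e_i),\xi\bigr)d\phi(e_i)=\mathrm{Ric}^N(\xi,\xi)\,\xi+(\mathrm{Ric}^N\,\xi)^{\top},
\]
valid because $\{d\phi(e_1),\dots,d\phi(e_m),\xi\}$ is an orthonormal frame of $N$ along $\phi(M)$; for the tangential equation I would invoke the Gauss equation to rewrite the remaining $R^N$ contributions in terms of $\mathrm{Ric}^M(\nabla\ln\lambda)$. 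The hard part will be the combinatorial bookkeeping: the polynomial-in-$m$ coefficients $2(m-2)$, $2m(m-4)$, $(m-2)(m-6)$ that appear in \eqref{NE}--\eqref{TE} are assembled from several contributions (the $(m-2)$ tangential piece of $\tau_{\bar g}$, the $(m-2)$ appearing in $\bar\Delta^\phi$, and their interactions), and sign tracking is delicate. Useful consistency checks that strongly pin down the coefficients are the specialisation $\lambda\equiv 1$, which must recover the biharmonic hypersurface equations of \cite{Ou1}, and $m=2$, which must reduce to the surface equations \eqref{M03}.
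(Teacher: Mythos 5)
Your plan follows essentially the same route as the paper: the paper's proof takes as its starting point the conformal-transformation formula for the bitension field (Equation (\ref{T2i}), quoted from \cite{Ou3}) --- which is precisely what your first two steps would re-derive from the tension-field formula and the conformal change of the rough Laplacian --- and then, just as you propose, expands the correction terms $J^{\phi}_{g}({\rm d}\phi(\nabla\ln\lambda))$, $\nabla^{\phi}_{\nabla\ln\lambda}\tau(\phi,g)$ and $\nabla^{\phi}_{\nabla\ln\lambda}{\rm d}\phi(\nabla\ln\lambda)$ via the Gauss and Weingarten formulas, splits everything into normal and tangential components, and inserts the known expressions for $\tau_2(\phi,g)^{\bot}$ and $\tau_2(\phi,g)^{\top}$ of a hypersurface from \cite{Ou1} together with the identity $\sum_{i}(\nabla_{e_i}A)(e_i)=m\nabla H-({\rm Ric}^N\xi)^{\top}$ and the Gauss equation, exactly as you indicate. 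The only difference is that you would re-derive the conformal change of the bitension field from scratch rather than citing it, which is extra bookkeeping but not a different idea.
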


\begin{proof}
It was proved in \cite{Ou3} that a conformal immersion $\phi : (M^m, \bar{g})\to (N^n, h)$ with $\phi^*h=\lambda^2\bar{g}$ is biharmonic if and only if
\begin{align}\label{T2i}
&\tau_{2}(\phi, g)+(m-2)J^{\phi}_{g}({\rm d}{\phi}({\nabla\ln}\lambda))\\\notag 
& +2\Big[\Delta {\rm ln}\lambda-(m-4)\left|{\nabla\ln}\lambda\right|^2\Big]\tau(\phi,g)
-(m-6)\nabla^{ \phi}_{{\nabla\ln}\lambda}\,\tau(\phi,g)\\\notag 
& -2(m-2)\Big[\Delta {\rm ln}\lambda-(m-4)\left|{\nabla\ln}\lambda\right|^2\Big]{\rm d}{\phi}({\nabla\ln}\lambda)\\\notag 
&+(m-2)(m-6)\nabla^{ \phi}_{{\nabla\ln}\lambda}\,{\rm d}{\phi}({\nabla\ln}\lambda)=0,
\end{align}
where $\tau_{2}(\phi, g)$ is the bitension field of the hypersurface $\phi(M)\subset (N^{m+1}, h)$, and  $\nabla$ and $\Delta$ denote the gradient and the Laplacian taken with respect to the metric $g=\lambda^2 {\bar g}$.

Since $\phi$ is an immersion, we can locally identify $p\in M$ with $\phi(p)\in N$, and ${\rm d}\phi(X)=X$. We will use these to compute the term $J^{ \phi}_{g}({\rm d} \phi(\nabla\ln\lambda))$. Let $\{\overline{e}_i\}_{1\leq i\leq m}$ be a local orthonormal frame in $M$ with respect to the Riemannian metric $\overline{g}$. Then, $\{e_i=\frac{1}{\lambda}\overline{e}_i\}_{1\leq i\leq m}$ is a local orthonormal frame in $M$ with respect to the Riemannian metric $g=\lambda^2\overline{g}$. From the definition of the Jacobi operator $J^{ \phi}_{g}$, we have
\begin{eqnarray}\label{eq2}
  J^{ \phi}_{{g}}({\rm d}{\phi}({\nabla\ln}\lambda))
   &=& -\sum_{i=1}^m\big[ R^N(\nabla\ln\lambda,{e}_i){e}_i
       + \nabla^\phi_{{e}_i}\nabla^\phi_{{e}_i}\nabla\ln\lambda
       - \nabla^\phi_{{\nabla}^M_{{e}_i}{e}_i}\nabla\ln\lambda\big].
\end{eqnarray}

Denoting $T_1 = -\sum_{i=1}^m R^N(\nabla\ln\lambda,{e}_i){e}_i, \; T_2 = - \sum_{i=1}^m\nabla^\phi_{{e}_i}\nabla^\phi_{{e}_i}\nabla\ln\lambda$ and \\$T_3 = \sum_{i=1}^m\nabla^\phi_{{\nabla}^M_{{e}_i}{e}_i}\nabla\ln\lambda
$, we will compute these three terms as follows.\\

By using the Gauss equation
\begin{eqnarray*}
  {g}({R}^M(V,W)X,Y)
   &=& h(R^N(V,W)X,Y)+h(B(W,X),B(V,Y))\\
   & &   -h(B(V,X),B(W,Y)),
\end{eqnarray*}
 we have the tangential part of $T_1$ is given by
\begin{eqnarray}\notag
  T_1^\top
   &=& -\sum_{i,j=1}^m h(R^N(\nabla\ln\lambda,{e}_i){e}_i,{e}_j){e}_j\\ \label{eq3}
   &=& -\sum_{i,j=1}^m\Big[{g}({R}^M(\nabla \ln\lambda,{e}_i){e}_i,{e}_j)
      +h(B(\nabla \ln\lambda,{e}_i),B({e}_i,{e}_j))\\
   &&\nonumber    -h(B({e}_i,{e}_i),B(\nabla \ln\lambda,{e}_j))\Big]{e}_j.
\end{eqnarray}

Using $B(X,Y)=g(A(X),Y)\xi,   \sum_{i=1}^mB({e}_i,{e}_i)=mH\xi, 
  $\\ $ {\rm Ric}^M\,(\nabla \ln\lambda)=\sum_{i=1}^m{R}^M(\nabla \ln\lambda,{e}_i){e}_i $ we can rewrite Equation \eqref{eq3} as
\begin{eqnarray}\label{eq4}
  T_1^\top
   &=&-{\rm Ric}^M\,(\nabla \ln\lambda)
      -\sum_{j=1}^mg(A(\nabla \ln\lambda),A({e}_j)){e}_j
      +mHA(\nabla \ln\lambda).
\end{eqnarray}

The normal part of $T_1$ is given by
\begin{align}\label{eq5}
  T_1^\bot = -\sum_{i=1}^mh(R^N(\nabla \ln\lambda,{e}_i){e}_i,\xi)\xi  =-{\rm Ric}^N(\nabla \ln\lambda,\xi)\xi.
\end{align}

From  \eqref{eq4} and \eqref{eq5}, we have
\begin{eqnarray}\label{T10}
  T_1 &=&\nonumber-{\rm Ric}^M\,(\nabla \ln\lambda)
      -\sum_{j=1}^mg(A(\nabla \ln\lambda),A({e}_j)){e}_j
      +mHA(\nabla \ln\lambda)\\
   & &-{\rm Ric}^N(\nabla \ln\lambda,\xi)\xi.
\end{eqnarray}

The tangential part of $T_2$ is given by
\begin{eqnarray}\label{eq8}
  T_2^\top
    &=&\nonumber - \sum_{i,j=1}^mh(\nabla^\phi_{{e}_i}\nabla^\phi_{{e}_i}\nabla \ln\lambda,{e}_j){e}_j\\
    &=&\nonumber -  \sum_{i,j=1}^m\big[{e}_ih(\nabla^\phi_{{e}_i}\nabla \ln\lambda,{e}_j)
                 - h(\nabla^\phi_{{e}_i}\nabla \ln\lambda,\nabla^\phi_{{e}_i}{e}_j)\big] e_j\\
    &=&\nonumber - \sum_{i,j=1}^m\Big[ {e}_ih({\nabla}^M_{{e}_i}\nabla \ln\lambda,{e}_j)
                 - h({\nabla}^M_{{e}_i}\nabla \ln\lambda,{\nabla}^M_{{e}_i}{e}_j)\\
    & &          - h(B({e}_i,\nabla \ln\lambda),B({e}_i,{e}_j))\Big] e_j.
\end{eqnarray}
Using $\phi^*h=\lambda^2\overline{g}=g, d\phi(X)=X$, we can write Equation \eqref{eq8}  as
   \begin{eqnarray}\label{eq9}
  T_2^\top
    &=&\nonumber  -  \sum_{j=1}^m\Big[\sum_{i=1}^m\Big({e}_i{g}({\nabla}^M_{{e}_i}\nabla \ln\lambda,{e}_j)
                  - {g}({\nabla}^M_{{e}_i}\nabla \ln\lambda,{\nabla}^M_{{e}_i}{e}_j)\Big)\\
    & &\nonumber  - g(A(\nabla \ln\lambda),A({e}_j))\Big] e_j\\
    &=&           -  \sum_{j=1}^m\Big[\sum_{i=1}^m{g}({\nabla}^M_{{e}_i}{\nabla}^M_{{e}_i}\nabla \ln\lambda,{e}_j)
                  - g(A(\nabla \ln\lambda),A({e}_j))\Big] e_j.
  \end{eqnarray}
  
The normal part of $T_2$ is given by
\begin{eqnarray}\notag
  T_2^\bot
    &=&\nonumber  -\sum_{i=1}^m\big[\nabla^\phi_{{e}_i}\nabla^\phi_{{e}_i}\nabla \ln\lambda\big]^\bot\\\notag
    &=&\nonumber  -\sum_{i=1}^m\big[\nabla^\phi_{{e}_i}{\nabla}^M_{{e}_i}\nabla \ln\lambda
                  +\nabla^\phi_{{e}_i}B({e}_i,\nabla \ln\lambda)\big]^\bot\\\notag
    &=&           - \sum_{i=1}^m\Big[B( {e}_i,{\nabla}^M_{{e}_i}\nabla \ln\lambda)
                  + \nabla^\bot_{{e}_i}B({e}_i,\nabla \ln\lambda)\Big].
  \end{eqnarray}
By using $\nabla^\bot_{{e}_i}\xi=0$ for all $i=1,...,m$, we find that
\begin{eqnarray}\label{eq10*}
  T_2^\bot
    &=&\nonumber  - \sum_{i=1}^m\Big[ g(A({e}_i),{\nabla}^M_{{e}_i}\nabla \ln\lambda)
                  + \nabla^\bot_{{e}_i}g(A({e}_i),\nabla \ln\lambda)\Big] \xi\\
    &=&           -\sum_{i=1}^m\Big[2 g(A({e}_i),{\nabla}^M_{{e}_i}\nabla \ln\lambda)
                  + g({\nabla}^M_{{e}_i}A({e}_i),\nabla \ln\lambda)\Big]\xi.
  \end{eqnarray}
From \eqref{eq9} and \eqref{eq10*}, we have
\begin{eqnarray}\label{eq11}
  T_2
    &=&\nonumber - \sum_{i=1}^m\Big[ {\nabla}^M_{{e}_i}{\nabla}^M_{{e}_i}\nabla \ln\lambda
                  - g(A(\nabla \ln\lambda),A({e}_i)){e}_i\\
    & &           +2 g(A({e}_i),{\nabla}^M_{{e}_i}\nabla \ln\lambda)\xi
                  + g({\nabla}^M_{{e}_i}A({e}_i),\nabla \ln\lambda)\xi\Big].
  \end{eqnarray}
By a direct calculation, we find that
\begin{eqnarray}\label{eq12}
  T_3 &=&\sum_{i=1}^m \nabla^\phi_{{\nabla}^M_{{e}_i}{e}_i}\nabla \ln\lambda= \sum_{i=1}^m\Big[{\nabla}^M_{{\nabla}^M_{{e}_i}{e}_i}\nabla \ln\lambda
                   +B({\nabla}^M_{{e}_i}{e}_i,\nabla \ln\lambda)\Big]\\\notag
      &=&       \sum_{i=1}^m\Big[ {\nabla}^M_{{\nabla}^M_{{e}_i}{e}_i}\nabla \ln\lambda
                   +g(A({\nabla}^M_{{e}_i}{e}_i),\nabla \ln\lambda)\xi\Big].
\end{eqnarray}
By substituting \eqref{T10}, \eqref{eq11} and \eqref{eq12} in \eqref{eq2}, and using the formulas
\begin{eqnarray*}
 {\rm Tr\,}({\nabla}^M)^2\nabla \ln\lambda  &=& {\rm Ric}^M\,(\nabla \ln\lambda)+\nabla {\Delta}\ln\lambda,\\
\sum_{i=1}^m (\nabla_{e_i}A)(e_i)&=&m\,\nabla H-({\rm Ric}^N\xi)^\top,\; (see\; e.g., [12])
  \end{eqnarray*}
 we obtain 
\begin{eqnarray}\label{eq13}
  J^{ \phi}_{{g}}({\rm d}{\phi}({\nabla\ln}\lambda))
   &=&\nonumber -2 {\rm Ric}^M\,(\nabla \ln\lambda)
                +mHA(\nabla \ln\lambda)
                - \nabla {\Delta}\ln\lambda\\
   & &          -2\sum_{i=1}^m g(A({e}_i),{\nabla}^M_{{e}_i}\nabla \ln\lambda)\xi
                -mg(\nabla H,\nabla \ln\lambda)\xi.
\end{eqnarray}
On the other hand, using $\tau(\phi,g)=mH\xi$ and a straightforward computation we have
\begin{eqnarray}\label{eq14}
  \nabla^{ \phi}_{\nabla\ln\lambda}\,\tau(\phi,g)
    &=&\nonumber m\nabla^{\phi}_{\nabla\ln\lambda}H\xi  \\
    &=&\nonumber mg(\nabla H,\nabla \ln\lambda)\xi
                 +mH\nabla^{\phi}_{\nabla\ln\lambda}\xi\\
    &=&   mg(\nabla H,\nabla \ln\lambda)\xi
          -mHA(\nabla \ln\lambda).
\end{eqnarray}
 Substituting \eqref{eq13}, \eqref{eq14}, and 
\begin{eqnarray}\notag
\nabla^{ \phi}_{\nabla\ln\lambda} d \phi(\nabla\ln\lambda)
   &=&\nonumber \nabla^{M}_{\nabla\ln\lambda}\nabla\ln\lambda
                +B(\nabla\ln\lambda,\nabla\ln\lambda)\\\notag
   &=& \frac{1}{2}\nabla |\nabla\ln\lambda|^2
       +g(A(\nabla\ln\lambda),\nabla\ln\lambda)\xi,
\end{eqnarray}
into \eqref{T2i}, we conclude that the  conformal immersion $\phi : (M^m, \overline{g})\to (N^{m+1}, h)$ is biharmonic if and only if
\begin{align}\label{16}
&\tau_{2}(\phi, g)+(m-2)\Big[-2{\rm Ric}^M (\nabla \ln\lambda)+mHA(\nabla \ln\lambda)- \nabla {\Delta}\ln\lambda\\\notag
&-2\sum_{i=1}^mg(A({e}_i),{\nabla}^M_{{e}_i}\nabla \ln\lambda)\xi-mg(\nabla H,\nabla \ln\lambda)\xi\Big]
 +2mH\Big[\Delta {\rm ln}\lambda\\\notag
&-(m-4)\left|\nabla\ln\lambda\right|^2\Big]\xi-m(m-6)\Big[g(\nabla H,\nabla \ln\lambda)\xi-HA(\nabla \ln\lambda)\Big]\\\notag
& -2(m-2)\Big[\Delta {\rm ln}\lambda-(m-4)\left|\nabla\ln\lambda\right|^2\Big]\nabla\ln\lambda\\\notag
&+(m-2)(m-6)\Big[\frac{1}{2}\nabla |\nabla\ln\lambda|^2
+g(A(\nabla\ln\lambda),\nabla\ln\lambda)\xi\Big]=0.
\end{align}
Theorem \ref{th1} follows from \eqref{16} and the following formulas (see \cite{Ou1}) of bitension field of the hypersurface $\phi : (M^m, g=\lambda^2{\bar g})\to (N^{m+1}, h)$
\begin{eqnarray*}
 \tau_{2}(\phi, g)^\bot  &=&  m\Big[\Delta H-H|A|^2+H{\rm Ric}^N(\xi,\xi)\Big]\xi\\
 \tau_{2}(\phi, g)^\top  &=& -m\Big[2A(\nabla H)+\frac{m}{2}\nabla H^2-2H({\rm Ric}^N\xi)^\top\Big].
\end{eqnarray*}
\end{proof}

\begin{remark}
(i) It is easily seen that when $\lambda = 1$, the biharmonic conformal hypersurface equations in Theorem \ref{th1} simply reduce to biharmonic hypersurface equation obtained  in \cite{Ou2}. (ii) One can also check that when $m=2$ the biharmonic equations given in Theorem \ref{th1} reduce to (\ref{M03}).
\end{remark}

It is well known that a  conformal immersion from a $2$-dimensional manifold is minimal if and only if it is harmonic which is always biharmonic. For the domain dimension  $m\ge 3$ a minimal conformal immersion is no longer harmonic, and we have
\begin{corollary}
A minimal conformal immersion $\phi : (M^m, \overline{g})\to (N^{m+1}, h)\;(m\ge 3)$ with $\phi^*h=\lambda^2\bar{g}$ is biharmonic if and only if
\begin{align}\label{normal-part-hyp}
&2\lambda\,{\rm Tr\,}g(A(\cdot),{\nabla}^M_{\cdot}\nabla \lambda)-(m-6)g(A(\nabla\lambda),\nabla\lambda)=0,\\\label{Min-T}
&2\lambda\, {\rm Ric}^M\,(\nabla \lambda)+\nabla (\lambda\Delta\lambda)-\frac{m-4}{2}\nabla |\nabla \lambda|^2-(m-2)\lambda^{-1}|\nabla \lambda|^2\nabla \lambda=0.
\end{align}
In particular, a totally geodesic hypersurface $\phi : (M^m,g)\to (N^{m+1}, h)$ $(m\ge  3)$  can be biharmonically conformally immersed into $ (N^{m+1}, h)$ with a conformal factor $\lambda$ if and only if
\begin{align}\label{TG}
2\lambda\, {\rm Ric}^M\,(\nabla \lambda)+\nabla (\lambda\Delta\lambda)-\frac{m-4}{2}\nabla |\nabla \lambda|^2-(m-2)\lambda^{-1}|\nabla \lambda|^2\nabla \lambda=0.
\end{align}
\end{corollary}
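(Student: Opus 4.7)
The plan is to derive the corollary as a direct consequence of Theorem \ref{th1} by specializing the biharmonic conformal hypersurface system (\ref{NE})--(\ref{TE}) to the case $H\equiv 0$ and then rewriting the resulting equations in terms of $\lambda$ instead of $\ln\lambda$. Minimality means $H$ vanishes identically, so $\nabla H = 0$, $\nabla H^2 = 0$, $A(\nabla H)=0$, and every term in (\ref{NE})--(\ref{TE}) carrying a factor of $H$ (including the $\mathrm{Ric}^N$ contributions) drops out.

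First I would substitute $H=0$ into the normal equation (\ref{NE}); the only surviving terms are those involving $A$ and $\nabla\ln\lambda$, namely
\[
-2(m-2)\,{\rm Tr\,}g(A(\cdot),\nabla^M_{\cdot}\nabla\ln\lambda)+(m-2)(m-6)\,g(A(\nabla\ln\lambda),\nabla\ln\lambda)=0.
\]
Assuming $m\ge 3$, I divide by $(m-2)$ and then apply the identities
\[
\nabla\ln\lambda=\lambda^{-1}\nabla\lambda,\qquad
\nabla^M_X\nabla\ln\lambda=\lambda^{-1}\nabla^M_X\nabla\lambda-\lambda^{-2}X(\lambda)\,\nabla\lambda,
\]
together with the self-adjointness of $A$, which gives $\sum_i e_i(\lambda)\,g(A(e_i),\nabla\lambda)=g(A(\nabla\lambda),\nabla\lambda)$. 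Multiplying through by $\lambda^2$ then yields (\ref{normal-part-hyp}) after collecting the scalar coefficients.

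Next I would handle the tangential equation (\ref{TE}). Again with $H=0$, the surviving part reads
\[
-(m-2)\bigl[2\,\mathrm{Ric}^M(\nabla\ln\lambda)+\nabla\Delta\ln\lambda\bigr]
-2(m-2)\bigl[\Delta\ln\lambda-(m-4)|\nabla\ln\lambda|^2\bigr]\nabla\ln\lambda
+\tfrac{1}{2}(m-2)(m-6)\nabla|\nabla\ln\lambda|^2=0.
\]
Dividing by $-(m-2)$ and converting to $\lambda$-quantities using
\[
\Delta\ln\lambda=\lambda^{-1}\Delta\lambda-\lambda^{-2}|\nabla\lambda|^2,\qquad
|\nabla\ln\lambda|^2=\lambda^{-2}|\nabla\lambda|^2,
\]
and their gradients, I would multiply through by $\lambda^2$, combine the $(\Delta\lambda)\nabla\lambda$ and $\lambda\nabla\Delta\lambda$ terms into the single expression $\nabla(\lambda\Delta\lambda)$, and gather the two scalar-type remainders $\lambda^{-1}|\nabla\lambda|^2\nabla\lambda$ and $\nabla|\nabla\lambda|^2$. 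This produces (\ref{Min-T}). Finally, the totally geodesic case is immediate: $A\equiv 0$ forces (\ref{normal-part-hyp}) to hold trivially, while (\ref{Min-T}) reduces verbatim to (\ref{TG}).

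The main obstacle is bookkeeping in the $\ln\lambda\mapsto\lambda$ conversion of the tangential equation: several different terms each contribute pieces of the form $\lambda^{-1}|\nabla\lambda|^2\nabla\lambda$, $(\Delta\lambda)\nabla\lambda$, and $\nabla|\nabla\lambda|^2$, and the integer coefficients $-2(m-4)$, $(m-6)$, $2$, and so on must combine precisely into $-(m-2)$ and $-\tfrac{m-4}{2}$ for the final form to emerge. The normal equation is comparatively routine. Throughout, each manipulation is reversible, which yields the stated "if and only if."
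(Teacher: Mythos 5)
Your proposal follows exactly the paper's route: set $H\equiv 0$ in (\ref{NE})--(\ref{TE}), divide out the factor $(m-2)$ (legitimate since $m\ge 3$), and convert from $\ln\lambda$ to $\lambda$ using $\Delta\ln\lambda=\lambda^{-1}\Delta\lambda-\lambda^{-2}|\nabla\lambda|^2$ and $|\nabla\ln\lambda|^2=\lambda^{-2}|\nabla\lambda|^2$; the paper compresses this into ``a straightforward calculation,'' and your tangential bookkeeping is correct --- the coefficients $2-2(m-3)+(m-6)=-(m-2)$ and $-1-\tfrac{m-6}{2}=-\tfrac{m-4}{2}$ do combine as you say, and $\lambda\nabla\Delta\lambda+(\Delta\lambda)\nabla\lambda=\nabla(\lambda\Delta\lambda)$ gives (\ref{Min-T}) exactly.

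One caveat on the normal equation: carrying out the conversion you describe, using
$\nabla^M_X\nabla\ln\lambda=\lambda^{-1}\nabla^M_X\nabla\lambda-\lambda^{-2}X(\lambda)\nabla\lambda$ and $\sum_i e_i(\lambda)\,g(A(e_i),\nabla\lambda)=g(A(\nabla\lambda),\nabla\lambda)$, the surviving equation
$2\,{\rm Tr\,}g(A(\cdot),\nabla^M_{\cdot}\nabla\ln\lambda)-(m-6)g(A(\nabla\ln\lambda),\nabla\ln\lambda)=0$
becomes, after multiplying by $\lambda^{2}$,
\begin{equation*}
2\lambda\,{\rm Tr\,}g(A(\cdot),{\nabla}^M_{\cdot}\nabla \lambda)-(m-4)\,g(A(\nabla\lambda),\nabla\lambda)=0,
\end{equation*}
because the extra $-2\lambda^{-2}g(A(\nabla\lambda),\nabla\lambda)$ coming from the Hessian term shifts the coefficient from $-(m-6)$ to $-(m-4)$. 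So your assertion that the collection of coefficients lands on (\ref{normal-part-hyp}) as printed is not quite right; the discrepancy appears to be a typo in the stated coefficient of (\ref{normal-part-hyp}) rather than a flaw in your method, and it is immaterial for the totally geodesic case since $A=0$ kills that equation entirely, so (\ref{TG}) is unaffected.
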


\begin{proof}
By definition, the conformal immersion $\phi : M^m\to (N^{m+1}, h)\;  (m\ge  3)$ is minimal, then $H=0$. Substituting this into (\ref{NE}) and (\ref{TE}) we see that a minimal conformal immersion is  biharmonic if and only if
\begin{align}\notag
&2{\rm Tr\,}g(A(\cdot),{\nabla}^M_{\cdot}\nabla \ln\lambda)-(m-6)g(A(\nabla\ln\lambda),\nabla\ln\lambda)=0,\\\notag
&\Big[2\, {\rm Ric}^M\,(\nabla \ln\lambda)+\nabla {\Delta}\ln\lambda\Big]+2\Big[\Delta {\rm ln}\lambda-(m-4)\left|\nabla\ln\lambda\right|^2\Big]\nabla\ln\lambda\\\notag
&-\frac{(m-6)}{2}\nabla |\nabla\ln\lambda|^2=0.
\end{align}

By using $\Delta \ln \lambda=\lambda^{-1}\Delta \lambda-\lambda^{-2}|\nabla\lambda|^2, |\nabla\ln\lambda|^2=\lambda^{-2}|\nabla\lambda|^2$ and a straightforward calculation we obtain the first statement of the corollary. The second statement follows from the first one with $A=0$.
\end{proof}

%As an immediate application, we have the following characterization of totally geodesic hypersurfaces that can be biharmonically conformally immersed into a Riemannian manifold.

%\begin{corollary}\label{TGM}
%A totally geodesic hypersurface $\phi : (M^m,g)\to (N^{m+1}, h)$ $(m\ge  3)$  can be biharmonic conformally immersed into $ (N^{m+1}, h)$ if and only if
%\begin{align}\label{TG}
%2\lambda\, {\rm Ric}^M\,(\nabla \lambda)+\nabla (\lambda\Delta\lambda)-\frac{m-4}{2}\nabla |\nabla \lambda|^2-(m-2)\lambda^{-1}|\nabla \lambda|^2\nabla \lambda=0.
%\end{align}
%In particular, if $m=4$, a totally geodesic hypersurface $\phi : (M^4,g)\to (N^{5}, h)$ can be biharmonically conformally immersed into $ (N^5, h)$ if and only if
%\begin{align}\label{4TGG}
%2\lambda\, {\rm Ric}^M\,(\nabla \lambda)+\nabla (\lambda\Delta\lambda)-2\lambda^{-1}|\nabla \lambda|^2\nabla  \lambda=0.
%\end{align}
%\end{corollary}

Recall that a smooth function $f : M \to \mathbb{R}$ is called an \textbf{isoparametric function} if there exist smooth functions $\alpha, \beta : \mathbb{R} \to \mathbb{R}$ such that $|\nabla f|^2 = \alpha(f)$ and $\Delta f = \beta(f)$. It was proved in \cite{BFO} that a conformal map $\phi: (M^m, g)\to (N^m, h)$ with $\phi^*h=\lambda^2 g$ and $m\ne 4$ from an Einstein manifold is biharmonic if and only if $\lambda$ is an isoparametric function.

Our next corollary shows that the conformal factor of a totally geodesic biharmonic conformal immersion into a space form is always an isoparametric function, and in this case the biharmonic equation reduces to an ODE, which helps to construct many examples of biharmonic conformal hypersurfaces.

\begin{corollary}\label{co3}
A totally geodesic hypersurface $\phi:(M^m,g)\rightarrow(N^{m+1}(c),h)$ of a space form of dimension $m\geq3$ admits a biharmonic conformal immersion into the ambient space with
a nonconstant conformal factor $\lambda$ if and only if  $\lambda$ is an isoparametric function on $(M,g)$ with $|\nabla \lambda|^2=\alpha(\lambda)$ and $\Delta\lambda=\beta(\lambda)$ for some smooth real functions $\alpha$ and $\beta$ solving the ODE
\begin{align}\label{TGCM}
\lambda \beta'(\lambda)+\beta(\lambda)-\frac{m-4}{2}\alpha'(\lambda)-(m-2)\lambda^{-1}\alpha(\lambda)+2(m-1)c\,\lambda=0.
\end{align}
In particular, for $m=4$, the equation reduces to
\begin{align}\label{TGC4}
\lambda \beta'(\lambda)+\beta(\lambda)-2\lambda^{-1}\alpha(\lambda)+6c\,\lambda=0.
\end{align}
\end{corollary}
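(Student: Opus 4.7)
The plan is to reduce equation (\ref{TG}) of the preceding corollary, using the totally geodesic and space form hypotheses, to an equation purely in $\lambda$ and then extract both the isoparametric condition and the ODE from it. First, since the isometric immersion $\phi:(M,g)\to(N^{m+1}(c),h)$ is totally geodesic into a space form of curvature $c$, the Gauss equation forces $(M,g)$ itself to have constant sectional curvature $c$, so that ${\rm Ric}^M=(m-1)c\,g$ and hence ${\rm Ric}^M(\nabla\lambda)=(m-1)c\,\nabla\lambda$. Substituting this into (\ref{TG}) produces the reduced vector equation
\[
\nabla(\lambda\Delta\lambda)-\tfrac{m-4}{2}\nabla|\nabla\lambda|^2
 \;=\; \bigl[(m-2)\lambda^{-1}|\nabla\lambda|^2-2(m-1)c\,\lambda\bigr]\nabla\lambda. \qquad(\star)
\]

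Next, I would exploit the observation that the right-hand side of $(\star)$ is pointwise parallel to $\nabla\lambda$. Pairing $(\star)$ with any $X\perp\nabla\lambda$ on the open regular set $U=\{\nabla\lambda\neq 0\}$ (nonempty since $\lambda$ is nonconstant) yields $X\bigl(\lambda\Delta\lambda-\tfrac{m-4}{2}|\nabla\lambda|^2\bigr)=0$, so the scalar $F:=\lambda\Delta\lambda-\tfrac{m-4}{2}|\nabla\lambda|^2$ is (locally) a smooth function of $\lambda$, say $F=\phi(\lambda)$. Pairing $(\star)$ with $\nabla\lambda$ then gives $\phi'(\lambda)|\nabla\lambda|^2=\bigl[(m-2)\lambda^{-1}|\nabla\lambda|^2-2(m-1)c\lambda\bigr]|\nabla\lambda|^2$; dividing by $|\nabla\lambda|^2$ on $U$ and rearranging gives
\[
|\nabla\lambda|^2=\frac{\lambda}{m-2}\bigl[\phi'(\lambda)+2(m-1)c\lambda\bigr]=:\alpha(\lambda),
\]
and then $\Delta\lambda=\lambda^{-1}\bigl[\phi(\lambda)+\tfrac{m-4}{2}\alpha(\lambda)\bigr]=:\beta(\lambda)$, so $\lambda$ is isoparametric.

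To finish the forward direction, I would substitute $\nabla|\nabla\lambda|^2=\alpha'(\lambda)\nabla\lambda$ and $\nabla(\lambda\Delta\lambda)=\bigl(\beta(\lambda)+\lambda\beta'(\lambda)\bigr)\nabla\lambda$ back into $(\star)$ and cancel a factor of $\nabla\lambda$ on $U$ to recover (\ref{TGCM}); the case $m=4$ follows by inspection since the $\tfrac{m-4}{2}\alpha'(\lambda)$ term drops out, leaving (\ref{TGC4}). The converse direction is a one-line substitution: if $\lambda$ is isoparametric and $\alpha,\beta$ satisfy (\ref{TGCM}), the chain rule reconstructs $(\star)$ and hence (\ref{TG}).

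The main technical obstacle will be ensuring that the locally defined $\phi$, and consequently $\alpha$ and $\beta$, patch together into globally well-defined smooth univariate functions and extend smoothly across the critical set $\{\nabla\lambda=0\}$, so that $\lambda$ is isoparametric in the global sense demanded by the statement. On $U$ this is merely the fact that a function whose gradient is parallel to $\nabla\lambda$ is locally a function of $\lambda$; off $U$ the equation $(\star)$ holds trivially, and the explicit formulas for $\alpha$ and $\beta$ in terms of $\phi$ should yield the required smooth extension, but this is the point of the proof where care is needed.
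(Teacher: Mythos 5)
Your proposal is correct and follows essentially the same route as the paper: reduce (\ref{TG}) via ${\rm Ric}^M(\nabla\lambda)=(m-1)c\,\nabla\lambda$ to a gradient identity of the form $\nabla F = f\,\nabla\lambda$, deduce that $F$ and hence $|\nabla\lambda|^2$ and $\Delta\lambda$ are functions of $\lambda$, and then cancel $\nabla\lambda$ to obtain the ODE. The only difference is that where you re-derive the key step by pairing with vectors orthogonal to $\nabla\lambda$ (and rightly flag the patching/critical-set issue), the paper simply invokes Lemma 2 of \cite{BO}, which packages exactly that argument.
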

\begin{proof}
It is also easily checked that a totally geodesic hypersurface in a space form $(N^{m+1}(c),h)$ is a space form of the same constant sectional curvature $c$, and hence $ {\rm Ric}^M\,(\nabla \lambda)=(m-1)c\,\nabla  \lambda$. It follows from these and  (\ref{TG}) that a totally geodesic hypersurface $\phi : (M^m,g)\to (N^{m+1}(c), h)\;  (m\ge  3)$ can be  biharmonic conformally immersed into $ (N^{m+1}(c), h)$ if and only if
\begin{align}\label{TGCC}
2(m-1)c\,\lambda\,\nabla \,\lambda+\nabla (\lambda\Delta\lambda)-\frac{m-4}{2}\nabla |\nabla \lambda|^2-(m-2)\lambda^{-1}|\nabla \lambda|^2\nabla  \lambda=0.
\end{align}
In particular, if $m=4$,  a totally geodesic hypersurface $\phi : (M^4,g)\to (N^{5}, h)$ can be biharmonic conformally immersed into $ (N^5, h)$ if and only if
\begin{align}\label{4TG}
6c\,\lambda\,\nabla \lambda+\nabla (\lambda\Delta\lambda)-2\lambda^{-1}|\nabla \lambda|^2\nabla  \lambda=0.
\end{align}
Now,  (\ref{TGCC}) can be written as
\begin{align}\notag
\nabla \,\Big[(m-1)c\,\lambda^2+ \lambda\Delta\lambda-\frac{m-4}{2} |\nabla \lambda|^2\Big]=(m-2)\lambda^{-1}|\nabla \lambda|^2\nabla  \lambda.
\end{align}
By using this and Lemma 2 of \cite{BO} we have

\begin{align}\label{TG20}
(m-1)c\,\lambda^2+ \lambda\Delta\lambda-\frac{m-4}{2} |\nabla \lambda|^2=u(\lambda),
\end{align}
for some smooth function $u$ with 
\begin{align}\label{TG21}
u'(\lambda)=\lambda^{-1}|\nabla \lambda|^2.
\end{align}
It follows from (\ref{TG21}) and (\ref{TG20}) that $\lambda$ is an isoparametric function. By substituting $|\nabla \lambda|^2 = \alpha(\lambda)$ and $\Delta \lambda = \beta(\lambda)$ into  (\ref{TGCC}) and a straightforward computation yields 
\begin{align}\notag
\Big[\lambda \beta'(\lambda)+\beta(\lambda)-\frac{m-4}{2}\alpha'(\lambda)-(m-2)\lambda^{-1}\alpha(\lambda)+2(m-1)c\,\lambda\Big]\nabla\lambda=0,
\end{align}
which gives (\ref{TGCM}) since $\lambda$ is not constant. Clearly,  (\ref{TGCM}) reduces to   (\ref{TGC4}) when $m=4$.
\end{proof}

Now  we will show that many  examples of proper biharmonic conformal immersions of totally geodesic hypersurfaces into a space form can be constructed by using Corollary \ref{co3}.

\begin{example}(\em Proper biharmonic conformal hypersurfaces in a Euclidean space)\\
Let $a_1,...,a_m,b\in\mathbb{R}$. Take
$$\Omega=\{(x_1,...,x_{m})\in\mathbb{R}^{m}\,|\,\sum_{i=1}^m a_ix_i+b>0\}.$$
We consider the totally geodesic hypersurface $\phi:\Omega\rightarrow(\mathbb{R}^{m+1},h)$ defined by
$$\phi(x_1,...,x_m)=(x_1,...,x_m, \sum_{i=1}^m a_ix_i+b),$$
where $h=dy_1^2+...+dy_{m+1}^2$ denotes the standard Euclidean metric on $\mathbb{R}^{m+1}$.
The components of the induced Riemannian metric $g=\phi^*h$ of this hypersurface are given by
$g_{ii}=1+a_i^2$ and $g_{ij}=a_i a_j$ for $i\neq j$. We look for
$\lambda(x_1,...,x_m)=\left(\sum_{i=1}^ma_ix_i+b\right)^t,$ for some constant $t$. A straightforward computation yields
\begin{align}\notag
\alpha(\lambda)=|\nabla \lambda|^2 = \frac{t^2|a|^2}{1+|a|^2}\lambda^{2(t-1)/t}, \;\beta(\lambda)=\Delta\lambda = \frac{t(t-1)|a|^2}{1+|a|^2}\lambda^{(t-2)/t},
\end{align}
where $|a|^2=\sum_{i=1}^m a_i^2$.
Substituting these into (\ref{TGCM}) with $c=0$, we conclude that the conformal hypersurface $\phi : (\Omega, \overline{g}=\lambda^{-2}g)\to (\mathbb{R}^{m+1}, h)$ with  $m\neq2$
is proper biharmonic if and only if $$2(m-4)t^2-(m-8)t-2=0.$$
If $m=4$, we get $t=1/2$. For $m\neq4$, we have $t=1/2$ or $t=-2/(m-4)$.
\end{example}

For a hyperbolic space form, i.e.,  a space form with constant negative sectional curvature, we have the following examples of  proper biharmonic conformal totally geodesic hypersurfaces.

\begin{example} ({\em Proper biharmonic conformal hypersurfaces in a hyperbolic space})\\
It is easy to check that the hypersurface
\begin{eqnarray*}
% \nonumber to remove numbering (before each equation)
 \phi:(\mathbb{R}^{m-1}\times\mathbb{R}^+,g_{ij}=x_{m}^{-2}\delta_{ij})  &\rightarrow&  (\mathbb{H}^{m+1},h_{ab}=y_{m+1}^{-2}\delta_{ab}),
\end{eqnarray*}
with $ \phi (x_1,...,x_m) = (1,x_1,...,x_m)$, is totally geodesic and Einstein with ${\rm Ric}^M=-(m-1)\,g$. We look for $\lambda=x_m^{-t}$ for some constant $t\neq0$ so that
 the conformal hypersurface
$$\phi:(\mathbb{R}^{m-1}\times\mathbb{R}^+,\overline{g}_{ij}=x_{m}^{2(t-1)}\delta_{ij})  \rightarrow (\mathbb{H}^{m+1},h_{ab}=y_{m+1}^{-2}\delta_{ab}),$$
is proper biharmonic. A straightforward computation yields $\alpha(\lambda)=|\nabla\lambda|^2=t^2\lambda^2$, and $\beta(\lambda)=\Delta \lambda=[(m-1)t+t^2]\lambda$.
Substituting these into (\ref{TGCM}) with $c=-1$, we conclude that the conformal hypersurface 
is proper biharmonic if and only if $$(m-4)t^2-(m-1)t+(m-1)=0,$$ which has solutions for $m=3, 4, 5$. More precisely,
for $m=4$, we get $t=1$ which had been found in \cite{Ou3}. For $m=3, 5$, we have $t=\pm\sqrt{3}-1$ and $t=2$ respectively.
\end{example}

\begin{example} ({\em Proper biharmonic conformal hypersurfaces in a sphere})\\
We know (see \cite{Ou3}) that the conformal hypersurface
\begin{eqnarray*}
% \nonumber to remove numbering (before each equation)
  \phi:(\mathbb{S}^4\backslash\{P\}\equiv\mathbb{R}^4,\overline{g}_{ij}=\delta_{ij}) &\rightarrow& (\mathbb{S}^5\backslash\{N\}\equiv\mathbb{R}^5,h_{ab}=\frac{4\delta_{ab}}{(1+|y|^2)^2}),\\
   (x_1,x_2,x_3,x_4)&\mapsto& (x_1,x_2,x_3,x_4,0)
\end{eqnarray*}
is proper biharmonic. Now let us use Corollary \ref{co3} to verify this.  it is easy to check that the associated isometric immersion
\begin{eqnarray*}
% \nonumber to remove numbering (before each equation)
  \phi:(\mathbb{S}^4\backslash\{P\}\equiv\mathbb{R}^4,g_{ij}=\frac{4\delta_{ij}}{(1+|x|^2)^2}) &\rightarrow& (\mathbb{S}^5\backslash\{N\}\equiv\mathbb{R}^5,h_{ab}=\frac{4\delta_{ab}}{(1+|y|^2)^2}),
\end{eqnarray*}
is totally geodesic. For $\lambda=2(1+|x|^2)^{-1}$, a straightforward computation yields
$$\alpha(\lambda)=|\nabla \lambda|^2=\frac{4|x|^2}{(1+|x|^2)^2}=2\lambda-\lambda^2,\quad \beta(\lambda)=\Delta\lambda=\frac{4(|x|^2-1)}{1+|x|^2}=4(1-\lambda).$$
So this $\lambda=2(1+|x|^2)^{-1}$ is indeed an isoparametric function, and one can check that it does satisfy Equation (\ref{TGC4}).
\end{example}

\section{Biharmonic conformal immersions of non-minimal totally umbilical hypersurfaces}
It is well known that totally geodesic hypersurfaces are a subclass of totally umbilical hypersurfaces and that a totally umbilical hypersurface is minimal if and only if it is totally geodesic. In this section we study the conditions under which a non-minimal (i.e., non-totally geodesic) totally umbilical  hypersurface can be biharmonically conformally immersed into a space form.

First, we will prove that  if a non-minimal totally umbilical hypersurface in a space form  can be biharmonically conformally immersed into the space form, then the conformal factor $\lambda$  must be an isoparametric function with respect to the induced metric. Furthermore, the explicit formulas for $|\nabla\lambda|^2, \Delta\lambda$ are determined.

\begin{corollary}\label{th2}
A non-minimal totally umbilical hypersurface $\phi : M^m\to (N^{m+1}(c), h)$ can be  biharmonically conformally immersed into $ (N^{m+1}(c), h)$ if and only if there exists a smooth function $\lambda: (M^m, \phi^*h)\to \r^+$ such that
\begin{align}\label{TU1}
&\Big[ 2m(m-4)H^2\lambda-2(m-2) (m-1)(H^2+c)\lambda +(m-2)^2\lambda^{-1}|\nabla  \lambda|^2  \Big] \nabla  \lambda\\\notag
&  -(m-2)\nabla (\lambda\Delta \lambda)+\frac{1}{2}(m-2)(m-4)\nabla |\nabla \lambda|^2=0,\\\label{TU2}
&m^2 (c-H^2)\lambda^2+ 4  \lambda\Delta \lambda-(m^2-8)  \left|\nabla  \lambda\right|^2=0.
\end{align}
\end{corollary}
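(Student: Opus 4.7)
The plan is to specialize the general biharmonic equations \eqref{NE} and \eqref{TE} of Theorem \ref{th1} to the totally umbilical setting in a space form $N^{m+1}(c)$ and show that the normal component collapses to \eqref{TU2} and the tangential component to \eqref{TU1}. The key geometric inputs are: (a) totally umbilical means $A=H\,{\rm Id}$, so $|A|^{2}=mH^{2}$, $A(\nabla\lambda)=H\nabla\lambda$, and ${\rm Tr}\,g(A(\cdot),\nabla^{M}_{\cdot}\nabla\ln\lambda)=H\Delta\ln\lambda$; (b) the Codazzi equation for a totally umbilical hypersurface in a space form forces $H$ to be constant on $M$, so $\nabla H\equiv 0$ and $\Delta H\equiv 0$; (c) the Gauss equation then gives ${\rm Ric}^{M}=(m-1)(c+H^{2})\,g$; and (d) in the ambient space form, ${\rm Ric}^{N}(\xi,\xi)=mc$ and $({\rm Ric}^{N}\xi)^{\top}=0$.

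For the normal equation \eqref{NE}, substituting (a)--(d) kills every term involving $\nabla H$ and factors out an overall $H$; since $\phi$ is non-minimal we have $H\neq 0$ and may divide through. The resulting scalar identity is linear in $\Delta\ln\lambda$ and $|\nabla\ln\lambda|^{2}$, so multiplying it by $\lambda^{2}$ and using $\lambda^{2}\Delta\ln\lambda=\lambda\Delta\lambda-|\nabla\lambda|^{2}$ and $\lambda^{2}|\nabla\ln\lambda|^{2}=|\nabla\lambda|^{2}$ will produce \eqref{TU2} after a short coefficient check.

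For the tangential equation \eqref{TE}, the first bracket vanishes by $\nabla H=0$ and $({\rm Ric}^{N}\xi)^{\top}=0$, the ${\rm Ric}^{M}(\nabla\ln\lambda)$ term becomes $(m-1)(c+H^{2})\nabla\ln\lambda$, and $HA(\nabla\ln\lambda)=H^{2}\nabla\ln\lambda$. I then multiply the resulting identity by $\lambda^{2}$ and convert each $\ln\lambda$-derivative to a $\lambda$-derivative, using in particular
\begin{equation*}
\lambda^{2}\nabla\Delta\ln\lambda=\nabla(\lambda\Delta\lambda)-2\Delta\lambda\,\nabla\lambda+2\lambda^{-1}|\nabla\lambda|^{2}\nabla\lambda-\nabla|\nabla\lambda|^{2}.
\end{equation*}
Collecting the five independent tensorial types $\lambda\nabla\lambda$, $\lambda^{-1}|\nabla\lambda|^{2}\nabla\lambda$, $\nabla(\lambda\Delta\lambda)$, $\nabla|\nabla\lambda|^{2}$, and $\Delta\lambda\,\nabla\lambda$ will reproduce \eqref{TU1}; in particular the $\Delta\lambda\,\nabla\lambda$ contributions must cancel exactly.

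The main obstacle is the bookkeeping in the tangential step: the clean coefficient $(m-2)^{2}$ of $\lambda^{-1}|\nabla\lambda|^{2}\nabla\lambda$ in \eqref{TU1} arises only after combining three separate contributions (from $\lambda^{2}\nabla\Delta\ln\lambda$, from $\lambda^{2}[\Delta\ln\lambda-(m-4)|\nabla\ln\lambda|^{2}]\nabla\ln\lambda$, and from $\lambda^{2}\nabla|\nabla\ln\lambda|^{2}$), and one must verify similarly the cancellation of the $\Delta\lambda\,\nabla\lambda$ terms and the reduction of $\frac{1}{2}(m-2)(m-6)+(m-2)$ to $\frac{1}{2}(m-2)(m-4)$ in front of $\nabla|\nabla\lambda|^{2}$. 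The converse direction is automatic because every step above is reversible under the standing assumption $H\neq 0$.
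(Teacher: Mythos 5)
Your proposal is correct and follows essentially the same route as the paper: the paper's proof simply specializes equations \eqref{NE} and \eqref{TE} of Theorem \ref{th1} using $A=H\,\mathrm{Id}$, the Gauss-equation identity ${\rm Ric}^M(\nabla\lambda)=(m-1)(H^2+c)\nabla\lambda$, $H\neq 0$, and the $\ln\lambda$-to-$\lambda$ conversion identities, calling the rest "a straightforward computation." Your write-up just makes that computation explicit (including the constancy of $H$ via Codazzi and the ambient Ricci facts, which the paper uses tacitly), and your coefficient checks $(m-2)^2$ and $\frac{1}{2}(m-2)(m-4)$ and the cancellation of the $\Delta\lambda\,\nabla\lambda$ terms all verify.
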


\begin{proof}
It is well known that  for a totally umbilical hypersurface $(M^m, g)$ in a space form  $(N^{m+1}(c), h)$, we have 
the shape operator $A=H Id$ where $Id$ is the identity map. By using Gauss's equation we find that
\begin{equation}\label{eq-th2}
 {\rm Ric}^M\,(\nabla \lambda)=(m-1)(H^2+c)\nabla \lambda.
\end{equation}
A straightforward computation using  Equation (\ref{eq-th2}), the assumption $H\ne0$, and the identities
\begin{eqnarray*}
% \nonumber to remove numbering (before each equation)
   \Delta \ln \lambda=\lambda^{-1}\Delta \lambda-\lambda^{-2}|\nabla \lambda|^2,\;
   |\nabla \ln\lambda|^2=\lambda^{-2}|\nabla \lambda|^2,
\end{eqnarray*}
we obtain (\ref{TU1}) and (\ref{TU2})  respectively from (\ref{NE}) and (\ref{TE}).\\
\end{proof}

It is easy to check that for $m=2$, (\ref{TU1}) and (\ref{TU2}) imply that $\lambda={\rm constant}$ and $H^2=c$. Note that $\lambda={\rm constant}$ means the conformal immersion is actually an isometric immersion up to a homothety. Using the fact that biharmonicity is invariant under homothety and the well known classifications of biharmonic surfaces in a 3-dimensional space form we recover the following 
\begin{corollary} \label{OWC}\cite{Ou2, WC}
Any totally umbilical conformal biharmonic surface in $\r^3,\;{\rm or} \;H^3$  is minimal, and the only conformal biharmonic surface in $S^3$ is actually a biharmonic surface and hence it is a part of $S^2(\frac{1}{\sqrt{2}})$ up to a homothety. 
\end{corollary}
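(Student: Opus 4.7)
The plan is to build directly on the observation already asserted in the paragraph preceding Corollary \ref{OWC}: when $m=2$, the system (\ref{TU1})--(\ref{TU2}) forces $\lambda$ to be constant and $H^2=c$. My first step would be to verify this reduction. Setting $m=2$ in (\ref{TU1}), every term carrying a factor of $m-2$ vanishes and one is left with $-8H^2\lambda\,\nabla\lambda=0$. Since $\lambda>0$ and the hypersurface is assumed non-minimal ($H\not\equiv 0$), this yields $\nabla\lambda\equiv 0$ on the open set where $H\neq 0$, which by continuity forces $\lambda$ to be (locally) constant on the whole surface. Substituting the constant $\lambda$ back into (\ref{TU2}) with $m=2$ kills the $\lambda\Delta\lambda$ and $|\nabla\lambda|^2$ terms and reduces the equation to $4(c-H^2)\lambda^2=0$, hence $H^2=c$.

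With these two reductions in hand, I would finish by a case analysis on the curvature $c$. If $c=0$ (ambient $\mathbb{R}^3$) or $c=-1$ (ambient $H^3$), then $H^2=c$ forces $H=0$, contradicting non-minimality; so no non-minimal totally umbilical conformal biharmonic surface can exist in these two ambients, which is precisely the first assertion. If $c=1$ (ambient $S^3$), then $\lambda$ being a positive constant means the conformal immersion is simply a homothety of the associated isometric immersion $\phi:(M^2,\phi^*h)\to S^3$. Since biharmonicity is invariant under homothety of the domain metric, the associated isometric immersion is itself biharmonic. I would then invoke the well-known classification of biharmonic surfaces in a $3$-dimensional space form (see \cite{Ou2}): the only non-minimal biharmonic surface in $S^3$ is a part of the small sphere $S^2\!\left(\tfrac{1}{\sqrt{2}}\right)$, which indeed is totally umbilical with $H^2=1=c$, consistent with the reduction.

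The only delicate point, as opposed to a genuine obstacle, is bookkeeping: one must verify that the single surviving contribution in (\ref{TU1}) at $m=2$ is the $2m(m-4)H^2\lambda\,\nabla\lambda$ term, so that the entire argument pivots on the non-vanishing of $H$. After that, the conclusion follows from homothety invariance together with a citation of the known classification, so no new PDE analysis is required.
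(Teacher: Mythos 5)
Your proposal is correct and follows essentially the same route as the paper: specialize (\ref{TU1})--(\ref{TU2}) to $m=2$ to force $\nabla\lambda\equiv 0$ and then $H^2=c$, rule out $c\le 0$ by non-minimality, and in $S^3$ reduce to an isometric immersion up to homothety and quote the classification of biharmonic surfaces in $3$-dimensional space forms. Your bookkeeping of the surviving term $2m(m-4)H^2\lambda\,\nabla\lambda=-8H^2\lambda\,\nabla\lambda$ at $m=2$ is the same check the paper leaves to the reader, and the continuity remark is even unnecessary since a totally umbilical surface in a space form has constant $H$.
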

 Note that  any totally umbilical surface in $\r^3$ is a part of a plane or a sphere. So the statement in Corollary \ref{OWC} about totally umbilical biharmonic conformal surface in $\r^3$ is equivalent to saying that no part of a non-minimal totally umbilical surface of $\r^3$ can be biharmonically conformally immersion into $\r^3$, which was proved in Corollary 2.9 in \cite{Ou2}.\\
 
Our next theorem  shows that the conformal factor of a   non-minimal totally  umbilical biharmonic conformal  hypersurface is an  isoparametric function whose $|\nabla\lambda|^2, \Delta\lambda$ can  be  determined explicitly.

\begin{theorem}\label{prop1}
If a non-minimal totally umbilical hypersurface $\phi : M^m\to (N^{m+1}(c), h)$  $(m\ge  3)$ in a space form can be biharmonically conformally immersed into $(N^{m+1}(c), h)$ with a conformal factor $\lambda$, then  $\lambda$ is an isoparametric function on  $ (M^m, \phi^*h)$ with 
\begin{eqnarray}
|\nabla \lambda|^2  &=&  \delta\,\lambda^{2} +\frac{2C_0}{m(m-2)}\,\lambda^{\frac{4}{m}},\label{function-a}\\
\Delta \lambda &=&  \frac{1}{4}\left[(m^2-8)\delta+m^2(H^2-c)\right]\lambda+\frac{C_0(m^2-8)}{2m(m-2)}\lambda^{-\frac{m-4}{m}},\label{b}
\end{eqnarray}
for some constant $C_0$ and $\delta=-\frac{{m}^{3}-2\,{m}^{2}+4\,m+8}{(m-2)^3} {H}^{2}+ c$.
\end{theorem}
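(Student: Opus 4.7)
The plan is to start from the pair of equations (\ref{TU1}) and (\ref{TU2}) provided by Corollary \ref{th2}, reduce the system to a single first-order linear ODE for $|\nabla\lambda|^{2}$ as a function of $\lambda$, and then recover $\Delta\lambda$ algebraically. First, I would solve (\ref{TU2}) for
$$\lambda\Delta\lambda=\frac{m^{2}-8}{4}\,|\nabla\lambda|^{2}+\frac{m^{2}(H^{2}-c)}{4}\lambda^{2},$$
take its gradient, and substitute the resulting expression for $\nabla(\lambda\Delta\lambda)$ into (\ref{TU1}). After this substitution every surviving term should be proportional either to $\lambda\nabla\lambda$, to $\lambda^{-1}|\nabla\lambda|^{2}\nabla\lambda$, or to $\nabla|\nabla\lambda|^{2}$.

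The main obstacle is the algebraic bookkeeping in these coefficients. One must verify that the coefficient of $\nabla|\nabla\lambda|^{2}$ simplifies to $-m(m-2)^{2}/4$ and, more delicately, that the coefficient of $\lambda\nabla\lambda$ collapses exactly to $\tfrac{(m-2)^{3}}{2}\delta$, where the peculiar constant $\delta=-\frac{m^{3}-2m^{2}+4m+8}{(m-2)^{3}}H^{2}+c$ arises from combining the umbilicity contribution $2m(m-4)H^{2}$, the Ricci term $-2(m-2)(m-1)(H^{2}+c)$, and the $\lambda\nabla\lambda$ piece $-\tfrac{m^{2}(m-2)}{2}(H^{2}-c)$ coming from (\ref{TU2}); recognising the resulting cubic in $m$ as $-(m^{3}-2m^{2}+4m+8)$ is the step most likely to demand care. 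Since $m\ge 3$ permits division by $(m-2)^{2}$, the equation should then reduce to the clean identity
\begin{equation}\label{keyid}
\nabla|\nabla\lambda|^{2}=\frac{2(m-2)}{m}\delta\,\lambda\nabla\lambda+\frac{4}{m\lambda}\,|\nabla\lambda|^{2}\nabla\lambda.
\end{equation}

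From (\ref{keyid}) the gradient of $|\nabla\lambda|^{2}$ is everywhere parallel to $\nabla\lambda$, so, exactly as in the proof of Corollary \ref{co3} (via Lemma 2 of \cite{BO}), one concludes that $|\nabla\lambda|^{2}=F(\lambda)$ for some smooth function $F$. Plugging this back into (\ref{keyid}) yields the linear ODE
$$F'(\lambda)-\frac{4}{m\lambda}F(\lambda)=\frac{2(m-2)}{m}\delta\,\lambda,$$
which I would integrate using the integrating factor $\lambda^{-4/m}$ to obtain $F(\lambda)=\delta\lambda^{2}+C_{1}\lambda^{4/m}$; the relabelling $C_{1}=\tfrac{2C_{0}}{m(m-2)}$ then produces (\ref{function-a}). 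Finally, feeding this expression for $|\nabla\lambda|^{2}$ into the formula for $\lambda\Delta\lambda$ from the first step and dividing by $\lambda$ gives (\ref{b}) after grouping the $\lambda$-linear and $\lambda^{-(m-4)/m}$ pieces. Since $|\nabla\lambda|^{2}$ and $\Delta\lambda$ are then smooth functions of $\lambda$ alone, $\lambda$ is isoparametric by definition, completing the argument.
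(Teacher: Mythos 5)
Your argument is correct and the coefficient bookkeeping checks out: eliminating $\nabla(\lambda\Delta\lambda)$ from \eqref{TU1} via \eqref{TU2} does leave $-\tfrac{m(m-2)^2}{4}$ in front of $\nabla|\nabla\lambda|^2$ and $\tfrac{(m-2)^3}{2}\,\delta$ in front of $\lambda\nabla\lambda$, the integrating factor $\lambda^{-4/m}$ gives exactly \eqref{function-a} (the integration uses $2-\tfrac{4}{m}\neq 0$, guaranteed by $m\ge 3$), and \eqref{b} then follows algebraically from \eqref{TU2}. The paper uses the same two ingredients --- Lemma 2 of \cite{BO} and a first-order linear ODE --- but in the opposite order: it first isolates $\nabla\bigl[2\lambda\Delta\lambda-(m-4)|\nabla\lambda|^2\bigr]$ inside \eqref{TU1}, applies the lemma to that combination to get $2\lambda\Delta\lambda-(m-4)|\nabla\lambda|^2=u(\lambda)$, and only then brings in \eqref{TU2} to turn \eqref{TG6} into an ODE for $u$, from which $|\nabla\lambda|^2$ is recovered. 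Your order of elimination is a little cleaner: the paper must split into cases according to whether $2\lambda\Delta\lambda-(m-4)|\nabla\lambda|^2$ is constant, and in the constant case it establishes isoparametricity by a separate argument without reconciling that case with \eqref{function-a}, whereas your gradient identity for $|\nabla\lambda|^2$ is an unconditional algebraic consequence of \eqref{TU1}--\eqref{TU2}. The one thing you should add is a sentence on the degenerate case: Lemma 2 of \cite{BO} needs a point where the gradient of the function in question is nonzero, and if instead $|\nabla\lambda|^2$ is constant your identity forces $(m-2)\delta\lambda^2+2|\nabla\lambda|^2=0$ on the set where $\nabla\lambda\neq 0$, which makes $\lambda$ locally constant there (or, if $\delta=0$, makes $|\nabla\lambda|^2=0$); either way $\lambda$ is constant, so the degenerate case is vacuous for a nonconstant conformal factor, but it deserves the one line.
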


\begin{proof}
It is easily checked that (\ref{TU1}) can be written as
\begin{align}\label{TU10}
&\Big[ 4m\frac{m-4}{m-2}H^2\lambda-4(m-1)(H^2+c)\lambda +2(m-2)\lambda^{-1}|\nabla  \lambda|^2  \Big] \nabla  \lambda\\\notag
 = &\nabla  [2 \lambda\Delta \lambda-(m-4) |\nabla \lambda|^2].
\end{align}
\textbf{Case I:} if $2\lambda \Delta\lambda-(m-4)|\nabla \lambda|^2=C_1$, a constant, then (\ref{TU10})  implies that  either $\nabla  \lambda\equiv 0$ and hence $\lambda$ is a constant, or
\begin{align}\label{xx}
4m\frac{m-4}{m-2}H^2\lambda-4(m-1)(H^2+c)\lambda +2(m-2)\lambda^{-1}|\nabla  \lambda|^2 =0,
\end{align} on an open set. In the latter case, we conclude from (\ref{xx}) that $|\nabla  \lambda|^2$ is a polynomial function of $\lambda$.  From  this  and $2\lambda \Delta\lambda-(m-4)|\nabla \lambda|^2=C_1$  we conclude that $\Delta \lambda$ is  also a function in $\lambda$. So, $\lambda$ is also an isoparametric function in this case.\\
\textbf{Case II:} if $2 \lambda \Delta\lambda-(m-4)|\nabla \lambda|^2$ is not a constant, then there exist a point $x_0$ such that  $\nabla \,[ 2 \lambda \Delta\lambda-(m-4)|\nabla \lambda|^2](x_0)\ne 0$. Applying Lemma 2 of \cite{BO} we have
\begin{align}\label{TG5}
2\lambda \Delta\lambda-(m-4)|\nabla \lambda|^2=u(\lambda),
\end{align}
for some nonconstant smooth function $u$ such that
\begin{align}\label{TG6}
u'(\lambda)=4m\frac{m-4}{m-2}H^2\lambda-4(m-1)(H^2+c)\lambda +2(m-2)\lambda^{-1}|\nabla  \lambda|^2.
\end{align}
This, together with Equations (\ref{TG5}), shows that  $\lambda$ is also an isoparametric function in this case. This gives the first statement of the theorem.\\

Now, since $H\ne 0$,  we can use (\ref{TG5}) and (\ref{TU2}) to have
\begin{align}\label{TG9}
&\left|\nabla \lambda\right|^2=\frac{1}{m(m-2)}\left[m^2(c-H^2)\lambda^2+ 2u(\lambda)\right].
\end{align}
By substituting (\ref{TG9}) into (\ref{TG6}), we obtain
\begin{align}\notag
u'(\lambda)=-2\left[\frac{m^2+4}{m-2}H^2+(m-2)c\right]\lambda+\frac{4}{m}\lambda^{-1}u(\lambda).
\end{align}
By solving this first order linear differential equation we get

\begin{eqnarray}\label{TG12}
% \nonumber to remove numbering (before each equation)
  u(\lambda)
   &=& -m\left[\frac{m^2+4}{(m-2)^2}H^2+\,c\right]\lambda^2+C_0 \lambda^{\frac{4}{m}},
\end{eqnarray}
for some constant $C_0$. Substituting (\ref{TG12}) into  (\ref{TG9}) yields
\begin{eqnarray}\label{TG13}
|\nabla\lambda|^2&=&\delta\,{\lambda}^{2} +\frac{2C_0}{m(m-2)}\,{\lambda}^{\frac{4}{m}},
\end{eqnarray}
where  $\delta=-\frac{{m}^{3}-2\,{m}^{2}+4\,m+8}{(m-2)^3} {H}^{2}+ c$.

Substituting (\ref{TG12}), (\ref{TG13}) into  (\ref{TG5}) and solving for $\Delta \lambda$ we obtain  (\ref{b}),

which completes the proof  of  the  theorem.
\end{proof}

As an application  of  Theorem \ref{prop1}, we prove  the  following  proposition which can  be stated as any  totally umbilical conformal biharmonic hypersurface in a space form of non-positive sectional curvature is  minimal.

\begin{proposition}\label{4-DN}
No part of a non-minimal totally umbilical hypersurface in a space form $N^{m+1}(c)$ with $c\le 0$ and $m\ge 4$ admits a  biharmonic conformal immersion into its ambient space $N^{m+1}(c)$.
\end{proposition}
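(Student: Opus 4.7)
The plan is to invoke Theorem \ref{prop1} in order to reduce the problem to the isoparametric system $|\nabla\lambda|^2 = \alpha(\lambda)$, $\Delta\lambda = \beta(\lambda)$ with the explicit $\alpha, \beta$ of (\ref{function-a})--(\ref{b}), and then to derive a sign contradiction at the top of the range of $\lambda$.

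The case $\lambda \equiv \text{const}$ is ruled out at once: $\nabla\lambda = 0, \Delta\lambda = 0$ reduces (\ref{TU2}) to $c = H^2 > 0$, contradicting $c \le 0$ with $H \ne 0$. So Theorem \ref{prop1} applies, and one checks directly that $\delta = -\frac{m^{3}-2m^{2}+4m+8}{(m-2)^{3}}H^2 + c < 0$ under $c \le 0$, $H \ne 0$, $m \ge 3$ (the coefficient of $H^2$ in $\delta$ is negative). The non-negativity of $|\nabla\lambda|^2 = \delta\lambda^2 + \frac{2C_0}{m(m-2)}\lambda^{4/m}$ with $\delta < 0$ and $\lambda > 0$ then forces $C_0 > 0$ and bounds $\lambda$ above by the unique positive root $\lambda^*$ of $\alpha$; elementary algebra using $\alpha(\lambda^*) = 0$ yields $\alpha'(\lambda^*) = \frac{2(m-2)\delta}{m}\lambda^* < 0$. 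Independently, rewriting (\ref{TU2}) as
\[
\Delta\lambda = \frac{m^2(H^2-c)}{4}\lambda + \frac{m^2-8}{4\lambda}|\nabla\lambda|^2
\]
shows that $\Delta\lambda > 0$ strictly on $M$ (both coefficients are positive for $m \ge 4$, $c \le 0$, $H \ne 0$), and that $\beta(\lambda^*) = \frac{m^2(H^2-c)}{4}\lambda^* > 0$.

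The closing step applies the Omori--Yau maximum principle. By the Gauss equation the totally umbilical hypersurface $M$ is intrinsically a space form of constant sectional curvature $c+H^2$; its Ricci curvature is bounded below, and Omori--Yau produces a sequence $\{p_k\}$ with $\lambda(p_k) \to \sup\lambda$, $|\nabla\lambda|(p_k) \to 0$, and $\limsup \Delta\lambda(p_k) \le 0$. The first two conditions combined with $|\nabla\lambda|^2 = \alpha(\lambda)$ force $\sup\lambda = \lambda^*$, so $\Delta\lambda(p_k) \to \beta(\lambda^*) > 0$, contradicting the third.

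The main obstacle is precisely a completeness issue: the statement permits $M$ to be a proper open piece of a non-compact totally umbilical hypersurface, such as part of a horosphere or equidistant hypersurface in $\mathbb{H}^{m+1}$, so the Omori--Yau step requires first extending $\lambda$ globally along the gradient flow using the real-analyticity of the isoparametric PDE. An alternative, purely local closure uses the Bochner formula combined with the Cauchy--Schwarz bound $|\mathrm{Hess}\,\lambda|^2 \ge (\Delta\lambda)^2/m$: substituting the isoparametric relations and $\mathrm{Ric}^M = (m-1)(H^2+c)g$ yields a pointwise inequality in $\lambda$ which, evaluated in the limit $\lambda \to \lambda^*$ (where $\alpha$ vanishes), reduces to $\tfrac{1}{2}\alpha'(\lambda^*)\beta(\lambda^*) \ge \beta(\lambda^*)^2/m$ -- impossible, since the LHS is strictly negative and the RHS strictly positive.
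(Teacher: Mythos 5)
Your setup is sound and, up to the Bochner--Weitzenb\"ock step, coincides with the paper's: both arguments invoke Theorem \ref{prop1}, substitute the isoparametric relations $|\nabla\lambda|^2=\alpha(\lambda)$, $\Delta\lambda=\beta(\lambda)$ into $\tfrac12\Delta|\nabla\lambda|^2=|{\rm Hess\,}\lambda|^2+g(\nabla\lambda,\nabla\Delta\lambda)+{\rm Ric}(\nabla\lambda,\nabla\lambda)$ together with $|{\rm Hess\,}\lambda|^2\ge\tfrac1m(\Delta\lambda)^2$, and your intermediate computations ($\delta<0$, $C_0>0$, $\lambda$ bounded above by the positive root $\lambda^*$ of $\alpha$, $\alpha'(\lambda^*)=\tfrac{2(m-2)\delta}{m}\lambda^*<0$, $\beta(\lambda^*)>0$) are all correct. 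The gap is in how you extract the contradiction. Both of your closures need the value $\lambda^*$ to be attained, or at least approached, by $\lambda$ on the domain, and the proposition concerns an arbitrary \emph{part} $U$ of the hypersurface: the range of $\lambda$ is merely some interval $I\subset(0,\lambda^*]$, and nothing forces $\sup I=\lambda^*$. The Omori--Yau route needs completeness, which you acknowledge but do not repair (the gradient flow of $\lambda$ may exit $U$ before $\lambda$ gets near $\lambda^*$, and there is no reason a solution $\lambda$ of the system on $U$ extends to the complete hypersurface). The ``purely local'' alternative has the same defect in disguise: the pointwise inequality $F(\lambda)\ge0$, where $F(\lambda)=\tfrac12\alpha'\beta+\tfrac12\alpha\alpha''-\alpha\beta'-(m-1)(H^2+c)\alpha-\tfrac1m\beta^2$, is only known to hold for $\lambda$ in the range $I$, so showing $F(\lambda^*)<0$ yields no contradiction unless $\lambda^*\in\overline{I}$.

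The paper closes the argument differently, and this is precisely where its remaining work lies: it rewrites $F(\lambda)\ge0$ as $Ak^2\lambda^{-2(m-4)/m}+Bk\lambda^{4/m}+C\lambda^2\ge0$ with explicit constants $A,B,C$ depending only on $m,c,H$, notes that $A\le0$ and $B\le0$ for $m\ge4$, $c\le0$, and combines this with the \emph{pointwise} constraint $k\lambda^{4/m}\ge-\delta\lambda^2$ (i.e.\ $\alpha(\lambda)\ge0$, valid at every point) to deduce the $\lambda$-free inequality $-B\delta+C\ge0$, which is then refuted by explicit polynomial estimates in $m$. That contradiction is obtained at \emph{every} point of $U$ where $\nabla\lambda\neq0$, so no maximum principle and no information about the boundary behaviour of $\lambda$ is needed. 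To repair your proof you would have to either justify $\sup_U\lambda=\lambda^*$ (false for general pieces) or show $F<0$ on all of $(0,\lambda^*)$ rather than only at the endpoint --- and the latter is essentially the paper's computation.
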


\begin{proof}
Note that if the conformal factor is a constant, then a biharmonic conformal immersion is actually an isometric immersion up to a homothety. It is well known that any totally umbilical biharmonic hypersurface in a space form of nonpositive curvature is minimal, which contradicts our assumption. So, we may assume  that $\phi : (U, g)\to (N^{m+1}(c), h)$ is a non-minimal totally umbilical hypersurface and  that there exists a non-constant positive function $\lambda$ such that the
conformal immersion of hypersurface $\phi : (U, \bar{g}=\lambda^{-2}g)\to (N^{m+1}(c), h)$ is biharmonic.

 It is not difficult to check  that for a totally umbilical hypersurface in a space form of constant sectional curvature $c$, we have
\begin{align}\label{ric}
 &{\rm Ric}^M\,(\nabla\lambda,\nabla\lambda)=  (m-1)(H^2+c)|\nabla\lambda|^2,\\\label{T20}
& |\nabla \lambda|^2  =\alpha(\lambda)=  \delta\,{\lambda}^{2} +k\,{\lambda}^{\frac{4}{m}},\\\label{T21}
& \Delta \lambda =\beta(\lambda)=  \frac{1}{4}\left[(m^2-8)\delta+m^2(H^2-c)\right]\lambda+\frac{1}{4}(m^2-8)k\lambda^{-\frac{m-4}{m}},
\end{align}
for some constant $k$ and $\delta=-\frac{{m}^{3}-2\,{m}^{2}+4\,m+8}{(m-2)^3} {H}^{2}+ c$. A straightforward computation yields
\begin{eqnarray}
% \nonumber to remove numbering (before each equation)
\Delta|\nabla\lambda|^2 &=&  \alpha'(\lambda)\beta(\lambda)+\alpha(\lambda)\alpha''(\lambda),\label{T5-1}\\
g(\nabla\lambda,\nabla\Delta\lambda)&=& \alpha(\lambda)\beta'(\lambda),\label{T5-2}\\
{\rm Ric}(\nabla\lambda,\nabla\lambda)&=& (m-1)(H^2+c)\alpha(\lambda).\label{T5-3}
\end{eqnarray}
Substituting (\ref{ric})-(\ref{T5-3}) into the  Bochner-Weitzenb\"{o}ck formula
$$\frac{1}{2}\Delta|\nabla\lambda|^2=|{\rm Hess\,}\lambda|^2+g(\nabla\lambda,\nabla\Delta\lambda)+{\rm Ric}(\nabla\lambda,\nabla\lambda),$$
and using the Newton's inequality $|{\rm Hess\,}\lambda|^2\geq \frac{1}{m}(\Delta\lambda)^2$ we have
\begin{eqnarray}\label{T5-4}
% \nonumber to remove numbering (before each equation)
 \frac{1}{2}\alpha'(\lambda)\beta(\lambda)  &+& \frac{1}{2}\alpha(\lambda)\alpha''(\lambda) -\alpha(\lambda)\beta'(\lambda)-(m-1)(H^2+c)\alpha(\lambda) \\  \nonumber
   &-&  \frac{1}{m}\beta(\lambda)^2\geq0.
\end{eqnarray}
A further computation using (\ref{T20}) and (\ref{T21})  we can rewrite (\ref{T5-4}) as
\begin{eqnarray}\label{T5-5}
% \nonumber to remove numbering (before each equation)
  Ak^2\lambda^{-\frac{2(m-4)}{m}}+Bk\lambda^{\frac{4}{m}}+C\lambda^2\geq0,
\end{eqnarray}
where $A$, $B$, and $C$ are given by
\begin{eqnarray*}
% \nonumber to remove numbering (before each equation)
  A &=& -(m-4)(m-2)^6(m^4-8m^2+32), \\
  B   &=& 8 c (m-2)^6(m^4-2m^3-12m+16)\\
      &&-32 (m-2)^3\left( {m}^{4}+10\,{m}^{3}-20\,{m}^{2}-8\,m+32\right){H}^{2},\\
  C &=& -16\,{c}^{2}m \left( m-2 \right) ^{6} \left( {m}^{2}-2\,m+4 \right) \\
     &&   -32\,mc \left( m-2 \right) ^{3} \left( {m}^{4}-4\,{m}^{3}+8\,{m}^{2}-32\right) {H}^{2}\\
 &&+256\, \left({m}^{6}-2{m}^{5}+2{m}^{4}+8\,{m}^{3}-
8\,{m}^{2}-16\,m \right) {H}^{4}.
\end{eqnarray*}
For $m\geq4$,  we use that fact that $A\leq0$ and  (\ref{T5-5}) to have
\begin{eqnarray}\label{T5-6}
% \nonumber to remove numbering (before each equation)
  Bk\lambda^{\frac{4}{m}}+C\lambda^2\geq0.
\end{eqnarray}
On the other hand, since $\lambda$ is not constant we may assume $ |\nabla \lambda|^2  >0$ in an open set, this and  (\ref{function-a}) implies that 
\begin{align}\notag
 k\,{\lambda}^{\frac{4}{m}}\geq-\delta\,{\lambda}^{2}, 
 \end{align}
 that is
\begin{eqnarray}\label{T5-7}
% \nonumber to remove numbering (before each equation)
  k\,{\lambda}^{\frac{4}{m}}\geq\left(\frac{{m}^{3}-2\,{m}^{2}+4\,m+8}{(m-2)^3} {H}^{2}- c\right)\,{\lambda}^{2}.
\end{eqnarray}
Since $c\leq0$, one can easily check that  $B\leq0$. Combining (\ref{T5-6}) and (\ref{T5-7}) yields
\begin{eqnarray}\label{T5-8}
% \nonumber to remove numbering (before each equation)
  B\left(\frac{{m}^{3}-2\,{m}^{2}+4\,m+8}{(m-2)^3} {H}^{2}- c\right)+C\geq0.
\end{eqnarray}
By substituting the values of $B$ and $C$ into (\ref{T5-8}), we have
\begin{eqnarray*}
% \nonumber to remove numbering (before each equation)
  &&-8\,{c}^{2} \left( m-2 \right) ^{6} \left( {m}^{4}-4\,{m}^{2}-4\,m+16\right)\\
  &&+8\,c \left( m-2 \right) ^{3} \left( {m}^{7}-4\,{m}^{6}+4\,{m}^{5}+8\,{m}^{4}+32\,{m}^{3}-160\,{m}^{2}+64\,m+256 \right)H^2\\
  &&-32\left({m}^{7}-20\,{m}^{5}+64\,{m}^{4}-16\,{m}^{3}-192\,{m}^{2}+192\,m+256\right)H^4\geq0.
\end{eqnarray*}
which is a contradiction since each of the three summands is negative due to the fact that $m\geq4$, $c\leq0$, and
\begin{eqnarray*}
% \nonumber to remove numbering (before each equation)
  &&{m}^{4}-4\,{m}^{2}-4\,m+16\geq {m}^{3}(m-4) +4\,m (m-3)>0 ,\\
  && {m}^{7}-4\,{m}^{6}+4\,{m}^{5}+8\,{m}^{4}+32\,{m}^{3}-160\,{m}^{2}+64\,m+256\\
   &&   \geq {m}^{5}(m-2)^{2}+8\,{m}^{2}(m-3)^{2}>0,\\
  &&{m}^{7}-20\,{m}^{5}+64\,{m}^{4}-16\,{m}^{3}-192\,{m}^{2}+192\,m+256\\
  &&\geq(m-3)\left[{m}^{4}({m}^{2}-9) +{m}^{4}(m-2)\right]>0.
\end{eqnarray*}
The contradiction completes the proof of the proposition.
%For the case $m=3$, an easy computation shows that (\ref {T5-5}) reduces to 
%\begin{align}\label{Q3}
%Q(x):= 41 x^2+8(7c-716H^2)x-48(7c^2+26cH^2-2672H^4)\ge 0,
%\end{align}
%where $x=k\lambda^{-2/3}$. It is easily checked that $Q(x)$ has two distinct  roots, and we denote them by $x_1< x_2$. It follows that the conformal factor $\lambda$ and $x=k\lambda^{-2/3}$ must satisfy $x<x_1$ or $x>x_2$ for $Q(x)\ge 0$. On the other hand, applying (\ref{m3}) with $m=3$ we obtain $x=k\lambda^{-2/3}> -(c-29H^2)$. Denoting $x_3=-(c-29H^2)$ we have that $\lambda$ and $x=k\lambda^{-2/3}$ must also satisfy $x> x_3$. However,  we can check that $Q(x_3)=-27(13c-125H^2)(c-H^2)<0$ since $c\le 0$. It follows that $x_1< x_3<x_2$. By the continuity of $Q(x)$, there exists an open interval $(x_3-\epsilon, x_3+\epsilon)$ on which we have $Q(x)<0$. This contradicts the requirement that $x> x_3$ and $Q(x)\ge 0$. Thus, we complete the proof.
\end{proof}

%%%%%%%%%%%%%%%%%%%%%%%%%%%%%%%%%%%%%%%%%%%%%%%%%%%%%%%%%%%%%%%%%%%%%%%%%%%%%%%%%%%%%%%%%%%

In contrast with Proposition \ref{4-DN}, our next proposition shows that when the ambient space has positive sectional curvature, we  do have  non-minimal totally umbilical biharmonic conformal hypersurfaces  in  $\mathbb{S}^{5}$. 
\begin{proposition}\label{S4}
For  $a, b\in \r$ with  $a>0$ and $a^2+b^2=1$.
A part of totally umbilical hypersurface $\phi:\mathbb{S}^m(a)\to \mathbb{S}^{m+1}, \phi(v)=(v, b)$ can be biharmonically conformally  immersed into $\mathbb{S}^{m+1}$ with $\lambda=(1+|x|^2)^{-1}$, where $x=(x_1, \cdots, x_m) $ is the conformal coordinates on $S^m(a)$,  if and only if $m=4$ and $a=1, b=0$, or $a=\frac{\sqrt{3}}{2}$ and $b=\pm  \frac{1}{2}$.
\end{proposition}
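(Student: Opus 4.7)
The plan is to apply Theorem \ref{prop1} in the non-minimal case and Corollary \ref{co3} in the totally geodesic case, matching the structure forced on the conformal factor by biharmonicity against the actual expressions for $|\nabla\lambda|^2$ and $\Delta\lambda$ induced on $\mathbb{S}^m(a)$ by $\lambda=(1+|x|^2)^{-1}$.

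First I would record the geometric data of the embedding. Viewing $\mathbb{S}^{m+1}\subset\mathbb{R}^{m+2}$, a short computation (the unit normal of $\phi(\mathbb{S}^m(a))$ in $\mathbb{S}^{m+1}$ is a simple linear combination of the ambient position vector and the constant vector $(0,\dots,0,1)$) shows that $\phi(v)=(v,b)$ realizes $\mathbb{S}^m(a)$ as a totally umbilical hypersurface of $\mathbb{S}^{m+1}$ with constant mean curvature $H=b/a$; in particular $H=0$ iff $b=0$, which by $a^2+b^2=1$ and $a>0$ forces $a=1$. In the conformal coordinates $x\in\mathbb{R}^m$ on $\mathbb{S}^m(a)$ the induced metric is $g_{ij}=\frac{4a^{2}}{(1+|x|^2)^2}\delta_{ij}$, and a routine calculation gives
\[
|\nabla\lambda|^2=\frac{\lambda-\lambda^2}{a^2},\qquad \Delta\lambda=\frac{m(1-2\lambda)}{2a^2}.
\]

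Second I would split on whether $b=0$ or $b\ne 0$. In the totally geodesic case $b=0$ (so $a=1$), Corollary \ref{co3} applies with $c=1$; substituting $\alpha(\lambda)=|\nabla\lambda|^2$ and $\beta(\lambda)=\Delta\lambda$ into the ODE \eqref{TGCM} and simplifying produces $(4-m)(1-2\lambda)\equiv 0$, which forces $m=4$ and is satisfied identically at $m=4$. In the non-minimal case $b\ne 0$ (so $H\ne 0$), Theorem \ref{prop1} applies: \eqref{function-a} reads $|\nabla\lambda|^2=\delta\lambda^2+\frac{2C_0}{m(m-2)}\lambda^{4/m}$, and comparison with $(\lambda-\lambda^2)/a^2$, whose only monomials are $\lambda$ and $\lambda^2$, forces the exponent $4/m=1$, hence $m=4$. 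Matching coefficients then gives $\delta=-1/a^2$ and $C_0=4/a^2$.

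The third step is the algebraic reconciliation using $\delta=-\frac{m^3-2m^2+4m+8}{(m-2)^3}H^2+c$, which at $m=4$ reduces to $\delta=-7H^2+1$. With $H^2=b^2/a^2=(1-a^2)/a^2$ and $c=1$, the equation $\delta=-1/a^2$ becomes $-7(1-a^2)/a^2+1=-1/a^2$, i.e.\ $8a^2=6$, so $a=\frac{\sqrt{3}}{2}$ and $b=\pm\frac{1}{2}$. This finishes the ``only if'' direction. For the ``if'' direction, the equator case is covered by the case analysis above (\eqref{TGCM} vanishes identically at $m=4$), while for $m=4$, $(a,b)=(\frac{\sqrt{3}}{2},\pm\frac{1}{2})$, direct substitution of the explicit $\alpha,\beta$ into \eqref{TU1} and \eqref{TU2} with $c=1$ and $H^2=1/3$ shows both equations vanish identically.

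The main obstacle is the careful bookkeeping in the coefficient matching: the two monomials $\lambda^2$ and $\lambda^{4/m}$ in \eqref{function-a} have to be treated as a priori independent because $4/m$ is not generally an integer, and once $m=4$, $\delta$, and $C_0$ are fixed one still has to verify that \eqref{b} is automatically consistent with the computed $\beta$. Both checks are mechanical but essential to reduce the problem to the single scalar equation $8a^2=6$ that pins down the value of $a$.
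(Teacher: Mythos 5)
Your proposal is correct, and in the non-minimal case it coincides with the paper's argument: both compare the explicitly computed $|\nabla\lambda|^2=\frac{1}{a^2}(\lambda-\lambda^2)$ and $\Delta\lambda=\frac{m}{2a^2}(1-2\lambda)$ against the forced forms \eqref{function-a} and \eqref{b} of Theorem \ref{prop1}, extract $m=4$ from the exponent $4/m$, and solve the resulting coefficient system to get $a^2=3/4$, $b=\pm\frac12$. Where you diverge is the totally geodesic case $b=0$: the paper does not touch Corollary \ref{co3} here but instead observes that, up to homothety, the map is the inverse stereographic projection $\r^m\to S^m$ (biharmonic as a conformal map iff $m=4$) followed by the totally geodesic equatorial embedding, and invokes the fact that post-composition with a totally geodesic map preserves biharmonicity. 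Your route through the ODE \eqref{TGCM}, which collapses to $(m-4)(2\lambda-1)=0$, is self-contained within the present paper and gives the ``only if'' part of that subcase without appealing to the external composition lemma, at the cost of a short computation; the paper's route is computation-free but leans on results from \cite{Ou8} and \cite{S}. You are also more careful than the paper on one point: Theorem \ref{prop1} only states necessary conditions, so matching against \eqref{function-a} and \eqref{b} does not by itself establish the ``if'' direction for $(a,b)=(\frac{\sqrt3}{2},\pm\frac12)$; the paper stops after solving the coefficient system, whereas you explicitly close the loop by substituting $H^2=\frac13$, $c=1$ back into \eqref{TU1} and \eqref{TU2} (both do indeed vanish identically). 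That verification, though mechanical, is a genuine improvement in rigor over the printed proof.
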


\begin{proof}
One can  check (see, e.g., \cite{PB}) that the totally umbilical hypersurface
$$\phi: \mathbb{S}^m\left(a\right)\to \mathbb{S}^{m+1}, \phi(v)=(v, b)$$
has shape operator and the mean curvature given by
\begin{eqnarray*}
% \nonumber to remove numbering (before each equation)
  A(X) = - \frac{b}{a} X,\quad\forall X\in\Gamma(TM),\;\;\;
  H  =- \frac{b}{a}.
\end{eqnarray*}
One can also check that  by using the conformal coordinates induced by stereographic projection the totally  umbilical hypersurface  can be  expressed as
\begin{eqnarray*}
% \nonumber to remove numbering (before each equation)
 \phi:(\r^m,g=\frac{4a^2\delta_{ij}}{(1+|x|^2)^2})  &\rightarrow& (\mathbb{S}^{m+1},h=\frac{4\delta_{\alpha\beta}}{(1+|y|^2)^2}). \\
   (x_1,...,x_m)&\mapsto& \frac{a}{1-b}\left(\frac{2x_1}{1+|x|^2},...,\frac{2x_m}{1+|x|^2},\frac{-1+|x|^2}{1+|x|^2}\right).
\end{eqnarray*}

By definition, this hypersurface can  be biharmoncally conformally immersed into $S^{m+1}$ with conformal factor $\lambda=(1+|x|^2)^{-1}$ means that the  the conformal immersion $\phi : (\r^m, \lambda^{-2}g=4a^2\delta_{ij})\to (\mathbb{S}^{m+1},h)$ is biharmonic. Note that,  up  to a homothety, this map is the composition of the inverse stereographic projection  $p^{-1}:\r^m \to  S^m(a)$ followed  by the standard embedding  $S^m(a)\hookrightarrow S^{m+1}, v\mapsto (v, b)$.

Note (cf. e.g.,   \cite{Ou8}) also that  the inverse stereographic projection  $p^{-1}:\r^m \to  S^m(a)$, as a conformal map, is biharmonic if and only if $m=4$. On the other hand, it was proved  in \cite{S} that the composition of a biharmonic map followed by a totally geodesic map is again a  biharmonic map. From  these, together  with  the observation  that the map corresponding  to the  case $m=4, a=1, b=0$ is  the composition of a biharmonic  conformal map followed by a  totally geodesic embedding, we obtain the first part of the statement of the proposition.

Now we will  use Theorem \ref{prop1} to prove the second part of the statement. We use  $\lambda(x)=(1+|x|^2)^{-1}$ and a direct calculation to  have
\begin{eqnarray*}
% \nonumber to remove numbering (before each equation)
  \nabla\lambda = -\frac{1}{2a^2}\,x_i\frac{\partial}{\partial x_i}, \;\;
  |\nabla\lambda|^2 = \frac{1}{a^2}\,\frac{|x|^2}{(1+|x|^2)^{2}},\;\;
  \Delta\lambda   =  \frac{m}{2a^2}\frac{-1+|x|^2}{1+|x|^2}.
\end{eqnarray*}
A further  computation yields
\begin{eqnarray*}
|\nabla\lambda|^2 =\frac{1}{a^2}(\lambda-\lambda^2), \;\;\;
\Delta\lambda =  \frac{m}{2a^2}(1-2\lambda).
\end{eqnarray*}
By comparing these with (\ref{function-a})  and (\ref{function-b}) respectively,  and using $c=1$, we have
\begin{eqnarray*}
&& \delta=-\frac{1}{a^2},\;\; m=4,\;\; \frac{2C_0}{m(m-2)}=\frac{1}{a^2},\\
&&  \frac{1}{4}\left[(m^2-8)\delta+m^2(\frac{b^2}{a^2}-1)\right]= -\frac{m}{a^2},\;\;\frac{m}{2a^2}=\frac{C_0(m^2-8)}{2m(m-2)},
\end{eqnarray*}
Solving these equations we have the only solution $m=4$, $b=\pm\frac{1}{2}$ and $a=\frac{\sqrt{3}}{2}$,  which completes the proof of proposition.
\end{proof}
\begin{remark}
Note that, up to a homothety, the two proper biharmonic conformal immersions given in  Proposition  \ref{S4}  can be viewed as the compositions
 \begin{center}$ \r^4 \xrightarrow[projection]{\tiny inverse\,stereographic\;} S^4\;\;\;\;\;\;\;\;\; \xrightarrow[embedding]{totally\, geodesic\;\;} S^5$,\\
 $ \r^4 \xrightarrow[projection]{\tiny inverse\,stereographic\;} S^4\footnotesize{\left(\frac{\sqrt{3}}{2}\right)} \xrightarrow[embedding]{totally\, umbilical\;} S^5$.\\
  \end{center}
 The interesting  thing is that this method works only for  dimension $m=4$.
%(II) It is well known (see \cite{CMO}) that the totally umbilical hypersurface $\phi: \mathbb{S}^m(\frac{1}{\sqrt{2}})\to \mathbb{S}^{m+1}, \phi(x)=(x, \frac{1}{\sqrt{2}})$ is proper biharmonic hypersurface in $\mathbb{S}^{m+1}$.\\
\end{remark}

%%%%%%%%%%%%%%%%%%%%%%%%%%%%%%%%%%%%%%%%%%%%%%%%%%%%%%%%%%%%%%%%%%%%%%%%%%%%%%%%%%%%%%%%%%%

\end{document}